\newcommand{\E}{\mathbb E}%expectation
\newcommand{\R}{\mathbb R}%reals
\newcommand{\W}{\mathrm W}%Wass distance
\newcommand{\TR}{\mathrm{Tr}}%Trace
\newcommand{\IM}{\mathcal I}%Fisher information matrix
\newcommand{\D}{\delta}%log-Sob deficit
\newcommand{\C}{\mathrm{cov}}%covariance
\newcommand{\ID}{\mathrm{Id}_n}%n by n identity matrix
\newtheorem{theorem}{Theorem}
\newtheorem{lemma}[theorem]{Lemma}
\newtheorem{question}[theorem]{Question}
\newtheorem{proposition}[theorem]{Proposition}
\newtheorem{corollary}[theorem]{Corollary}
\theoremstyle{definition}
\theoremstyle{remark}
\newtheorem{remark}{Remark}
\begin{document}

\title{stability of the logarithmic Sobolev inequality via the F\"ollmer Process}
\author{Ronen Eldan}
\thanks{Ronen Eldan is the incumbent of the Elaine Blond Career development chair, supported by a European Research Council Staring Grant (ERC StG) and by an Israel Science Foundation grant no. 715/16.}
\address{Department of Mathematics,
Weizmann Institute of Science,
Rehovot 76100, Israel} 
\email{ronen.eldan@weizmann.ac.il}

\author{Joseph Lehec}
\address{Ceremade (UMR CNRS 7534),
Université Paris-Dauphine,
75016 Paris and 
DMA (UMR CNRS 8553),
École Normale Supérieure,
75005 Paris, France}
\email{lehec@ceremade.dauphine.fr}

\author{Yair Shenfeld}
\thanks{Yair Shenfeld was supported in part by NSF grants CAREER-DMS-1148711 and  DMS-1811735, ARO through PECASE award W911NF-14-1-0094, and the Simons 
Collaboration on Algorithms \& Geometry. His residence at MSRI was supported by NSF grant DMS-1440140.}
\address{Sherrerd Hall 323, Princeton University, Princeton, NJ
08544, USA}
\email{yairs@princeton.edu}

\maketitle

\begin{abstract}
We study the stability and instability of the Gaussian logarithmic Sobolev inequality, in terms of covariance, Wasserstein distance and Fisher information, addressing several open questions in the literature. We first establish an improved logarithmic Sobolev inequality which is at the same time scale invariant and dimension free. As a corollary, we show that if the covariance of the measure is bounded by the identity, one may obtain a sharp and dimension-free stability bound in terms of the Fisher information matrix. We then investigate under what conditions stability estimates control the covariance, and when such control is impossible. For the class of measures whose covariance matrix is dominated by the identity, we obtain optimal dimension-free stability bounds which show that the deficit in the logarithmic Sobolev inequality is minimized by Gaussian measures, under a fixed covariance constraint. On the other hand, we construct examples showing that without the boundedness of the covariance, the inequality is not stable. Finally, we study stability in terms of the Wasserstein distance, and show that even for the class of measures with a bounded covariance matrix, it is hopeless to obtain a dimension-free stability result. The counterexamples provided motivate us to put forth a new notion of stability, in terms of proximity to mixtures of the Gaussian distribution. We prove new estimates (some dimension-free) based on this notion. These estimates are strictly stronger than some of the existing stability results in terms of the Wasserstein metric. Our proof techniques rely heavily on stochastic methods.
\\
\emph{keywords:} Quantitative functional inequalities, stochastic methods. 
\end{abstract}

\section{Introduction}

\subsection{Overview} The logarithmic Sobolev inequality is one of the fundamental Gaussian functional inequalities \cite{ledoux}. The inequality was proven independently in the information-theoretic community by Stam~\cite{stam} and in the mathematical-physics community by Gross \cite{gross}. The form of the inequality which we consider in this paper states that for any nice enough probability measure $\mu$ on $\R^n$,
\begin{equation}\label{eq:lsi}
H(\mu\mid \gamma)\le \frac{1}{2}I(\mu\mid \gamma).
\end{equation}
Here $\gamma$ is the standard Gaussian measure on $\mathbb R^n$ with density
\[
\gamma(dx)=(2\pi)^{-\frac{n}{2}}e^{-\frac{\vert x\vert^2}{2}}dx,
\]
and $H(\mu\mid \gamma),~I(\mu\mid \gamma)$ are the relative entropy and relative Fisher information respectively: 
\[
H(\mu\mid\gamma)=\int_{\R^n}\log \left(\frac{d\mu}{d\gamma}\right)d\mu ,
\]
and 
\[
I(\mu\mid\gamma)=\int_{\R^n}\left\vert\nabla\log \left(\frac{d\mu}{d\gamma}\right)\right\vert^2d\mu.
\]
The inequality~\eqref{eq:lsi} is sharp as can be seen by taking $\mu$ to be any translation of $\gamma$, and in fact these are the only equality cases as was proved in \cite{carlen}. This characterization naturally leads to the question of stability. That is, supposing that the deficit
\[
\D ( \mu )  := \frac 12 I(\mu\mid \gamma ) - H ( \mu \mid \gamma) 
\] 
is small, in what sense is $\mu$ close to a translate of $\gamma$? The study of stability questions for Gaussian inequalities is an ongoing active area of research with many applications \cite{FIL}, \cite{MN}. The precise notion of stability is context-dependent, but a common thread is the desire to make the stability estimates dimension-free. This is because the Gaussian measure itself is inherently infinite-dimensional, so we expect functional inequalities about Gaussian measures in $\R^n$ to extend to infinite dimensions. Indeed, the infinite-dimensional nature of the logarithmic Sobolev inequality is crucial to its applications to quantum field theory, which was the original motivation of Gross. For example, it was proven in a series of works \cite{CFMP}, \cite{MN, MN1}, \cite{eldan}, \cite{BBJ} that the Gaussian isoperimetric inequality (which implies the log-Sobolev inequality) enjoys such dimension-free estimates. The logarithmic Sobolev inequality however, turns out to be much more delicate.

\subsection{Fisher information matrix and deficit} Our first observation is that the log-Sobolev inequality can be self-improved in a dimension-free way. This observation then leads to natural stability results, provided that $\C(\mu)\preceq \ID$. Let us formulate first the log-Sobolev inequality in an alternative way.  Define the entropy and Fisher information of $\mu$ with respect to the Lebesgue measure by
\[
H ( \mu \mid \mathcal L ) = \int_{\R^n} \log \left( \frac {d \mu} {dx} \right) \, d\mu , 
\]
and 
\[
I( \mu \mid \mathcal L ) = \int_{\R^n} \left\vert\nabla\log \left( \frac {d \mu} {dx} \right)\right\vert^2 \, d\mu.
\]
The log-Sobolev inequality \eqref{eq:lsi} then reads 
\[
H ( \mu \mid \mathcal L ) - H( \gamma \mid \mathcal L )\leq\frac{1}{2}\left(I( \mu \mid \mathcal L )-n\right).
\]
It is well known (see for instance the very end of~\cite{carlen}) that the above inequality can be improved via scaling. Let $X\sim\mu$ and let $\sigma>0$. Computing the entropy and Fisher information of the law of $\sigma X$, and optimizing over $\sigma$, shows that 
\begin{equation}\label{eq:lsiDim2}
H(\mu\mid \mathcal L) - H( \gamma \mid \mathcal L ) \leq \frac{n}{2} 
\log \left( \frac { I( \mu\mid\mathcal L ) } n \right).
\end{equation} 
Inequality \eqref{eq:lsiDim2} is known as the \emph{dimensional logarithmic Sobolev inequality}. Our first result shows that this bound is sub-optimal, and that one should consider the individual eigenvalues of the Fisher information matrix:
\[
\IM ( \mu \mid \mathcal L ) := \int_{\R^n}
\left(\nabla \log \left(\frac{d\mu}{dx}\right)\right)^{\otimes 2} \, d\mu.
\]
This matrix is of course related to the Fisher information via $\TR[\IM ( \mu \mid \mathcal L )]=I( \mu \mid \mathcal L)$.
\begin{theorem}\label{thm:fisherEig}
Let $\mu$ be a probability measure on $\R^n$. Then  
\begin{equation}
\label{eq:fisherEig1}
H(\mu\mid\mathcal L) - H ( \gamma \mid \mathcal L ) \leq \frac12 \log \det \left[\mathcal I ( \mu \mid \mathcal L )\right]. 
\end{equation}
\end{theorem}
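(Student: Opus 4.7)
The plan is to apply the Lebesgue form of the log-Sobolev inequality, $H(\nu\mid\mathcal L) - H(\gamma\mid\mathcal L) \leq \frac{1}{2}(I(\nu\mid\mathcal L) - n)$ recorded in the excerpt, to the linear pushforward $\nu = \mathrm{law}(AX)$ with $X\sim\mu$ and $A$ a symmetric positive definite matrix, and then to optimize over $A$. The scalar rescaling $A = \sigma\,\ID$ reproduces precisely the dimensional bound \eqref{eq:lsiDim2}; allowing $A$ to be a genuine matrix is what brings $\log\det\IM(\mu\mid\mathcal L)$ into play, because we can then absorb the full spectral structure of the Fisher information matrix into the rescaling rather than just its trace.

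A standard change of variables shows that under $X \mapsto AX$ the differential entropy shifts as $H(\nu\mid\mathcal L) = H(\mu\mid\mathcal L) - \log\det A$. For the Fisher matrix, the chain rule gives $\nabla \log \rho_\nu(y) = A^{-1}\nabla\log \rho_\mu(A^{-1}y)$ (using $A = A^T$), which after a change of variables in the integral defining $\IM$ yields
$$\IM(\nu\mid\mathcal L) = A^{-1}\,\IM(\mu\mid\mathcal L)\,A^{-1}, \qquad I(\nu\mid\mathcal L) = \TR\bigl[A^{-2}\,\IM(\mu\mid\mathcal L)\bigr].$$
Setting $B = A^{-2}$ (so that $\log\det A = -\tfrac{1}{2}\log\det B$) and substituting into the Lebesgue LSI, a brief rearrangement produces the bound
$$H(\mu\mid\mathcal L) - H(\gamma\mid\mathcal L) \;\leq\; \frac{1}{2}\Bigl(\TR\bigl[B\,\IM(\mu\mid\mathcal L)\bigr] - \log\det B - n\Bigr),$$
valid for every symmetric positive definite $B$.

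The final step is to minimize the right-hand side over $B$. This is elementary matrix optimization: for any symmetric positive definite $M$, the function $B \mapsto \TR[BM] - \log\det B$ attains its minimum $n + \log\det M$ at $B = M^{-1}$ (one can see this by diagonalizing $B^{1/2}MB^{1/2}$ and using the scalar inequality $x - \log x \geq 1$ with equality at $x=1$). Choosing $B = \IM(\mu\mid\mathcal L)^{-1}$, i.e.\ rescaling by $A = \IM(\mu\mid\mathcal L)^{1/2}$, therefore gives exactly $\frac{1}{2}\log\det\IM(\mu\mid\mathcal L)$, which is \eqref{eq:fisherEig1}.

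The main obstacle I expect is purely technical: the above change of variables and the application of the LSI to $\nu$ require $\IM(\mu\mid\mathcal L)$ to be nondegenerate and $\mu$ regular enough for the integration by parts implicit in the Lebesgue form of LSI. The invertibility of $\IM(\mu\mid\mathcal L)$ is automatic in the nontrivial case, because a vanishing eigenvalue would force $\log\rho_\mu$ to be constant in some direction, incompatible with $\mu$ being a probability measure on $\R^n$; the remaining regularity issues are handled by a routine smoothing and approximation argument.
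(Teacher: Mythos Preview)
Your proposal is correct and matches the paper's first proof (Section~\ref{sec:self}) essentially verbatim: apply the Lebesgue-form log-Sobolev inequality to the pushforward of $\mu$ under a symmetric positive definite linear map, compute the resulting entropy and Fisher information, and optimize, with the optimal scaling being $A=\IM(\mu\mid\mathcal L)^{1/2}$. The paper also gives a second, independent proof via the F\"ollmer process in Section~\ref{sec:comp}, but your argument coincides with the scaling proof.
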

Theorem \ref{thm:fisherEig} improves upon~\eqref{eq:lsiDim2} by the AM/GM inequality. 
Note also that~\eqref{eq:fisherEig1} is at the same time scale invariant 
and dimension-free: both sides of the inequality behave additively when taking tensor products.
\begin{remark} After the first version of this work was released, we realized that Theorem~\ref{thm:fisherEig} had already been obtained by Dembo, see~\cite{dembo}, at the beginning of page 12. Its application to the stability of the logarithmic Sobolev inequality, see Corollary~\ref{cor:fisherEig} and Theorem~\ref{thm:cov} below, appears to be new. 
\end{remark}
\begin{remark} A reverse form of Theorem~\ref{thm:fisherEig} is known when the measure is log-concave. Observe first the integration by parts identity
\[
\mathcal I ( \mu \mid \mathcal L ) = - \int_{\R^n}
\nabla^2 \log \left(\frac{d\mu}{dx}\right) \, d\mu.
\]
The reverse form of Theorem~\ref{thm:fisherEig} then asserts that if $\mu$ is log-concave and if $\log \det$ is moved inside the integral in the right-hand side of~\eqref{eq:fisherEig1}, then the inequality is reversed, see~\cite{AKSW}. See also~\cite{CFGLSW} 
for a simpler proof based on the functional Santal\'o inequality.  
\end{remark}
\bigskip

Self-improvements of the form of \eqref{eq:lsiDim2} and \eqref{eq:fisherEig1} lead to stability results for the log-Sobolev inequality, provided that the covariance of $\mu$ is bounded by the identity. Define the function $\Delta(t):=t-\log(1+t)$ for $t>-1$. It was observed in \cite{BGRS} that if $\E_\mu [\vert x\vert^2]\le n$, then \eqref{eq:lsiDim2} implies that
\begin{equation}\label{eq:lsiDimCov}
\D(\mu)\ge \frac{n}{2}\Delta\left(\frac{I(\mu\mid\gamma)}{n}\right).
\end{equation}
From \eqref{eq:lsiDimCov} one can deduce weaker but more amenable stability statements. For example,
\begin{equation}\label{eq:lsiDimWas}
\D(\mu)\ge \frac{c}{n}\W_2^4(\mu,\gamma) 
\end{equation}
for some universal constant $c$, see~\cite{BGRS} for the details. Here, $\W_2(\mu,\gamma)$ is the Wasserstein two-distance between $\mu$ and $\gamma$. In general, the $p$-Wasserstein distance ($p\ge 1$) for probability measures $\mu,\nu$ is defined as 
\begin{equation}\label{eq:wasserstein}
\W_p(\mu,\nu)
:=\inf_{X,Y}  \left\{ \E [\, \vert X-Y \vert^p ]^{1/p} \right\},
\end{equation}
where the infimum is taken over all couplings $(X,Y)$ of $(\mu,\nu)$. A problematic feature of both bounds, \eqref{eq:lsiDimCov} and \eqref{eq:lsiDimWas}, is that they are dimension-dependent: Letting formally $n$ tend to $+\infty$, we see that the lower bound on the deficit tends to $0$ in both cases (observe that $\Delta (\epsilon)\sim \epsilon^2/2$ when $\epsilon$ tends to $0$). Note also that the log-Sobolev deficit behaves additively when taking tensor products, and that neither of the two lower bounds~\eqref{eq:lsiDimCov} and~\eqref{eq:lsiDimWas} does. In particular if $\mu$ is the product of a $1$-dimensional measure by a $(n-1)$-dimensional standard Gaussian, the lower bound is of order $1/n$ in both cases, whereas the deficit is of order $1$. On the other hand, we can deduce from Theorem \ref{thm:fisherEig} the following dimension-free estimate. 
\begin{corollary}\label{cor:fisherEig}
Let $\mu$ be a probability measure on $\R^n$ such that $\E_\mu [x^{\otimes 2}]\preceq \ID$, and let $\{ \beta_i \}_{i=1}^n$ be the 
eigenvalues of its \textbf{Gaussian} Fisher information matrix $\mathcal I ( \mu \mid \gamma )$. 
Then 
\begin{equation}\label{eq:fisherEig2}
\D ( \mu ) \geq \frac12\sum_{i=1}^n \Delta(\beta_i).
\end{equation}
\end{corollary}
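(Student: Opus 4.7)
The plan is to translate the bound in Theorem~\ref{thm:fisherEig} into a statement about Gaussian-relative quantities by means of integration by parts, and then exploit the covariance bound to compare the eigenvalues of $\mathcal I(\mu\mid \mathcal L)$ with those of $\mathcal I(\mu\mid\gamma)$.

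First I would work out the elementary relations between Lebesgue and Gaussian entropy / Fisher / Fisher-matrix. A direct computation using $\log(d\gamma/dx) = -\tfrac n2\log(2\pi) - |x|^2/2$ gives
\[
H(\mu\mid\gamma) = H(\mu\mid\mathcal L) - H(\gamma\mid\mathcal L) - \tfrac{n}{2} + \tfrac12\E_\mu[|x|^2],
\]
while integration by parts (which on the matrix level yields $\int \nabla\log(d\mu/dx)\otimes x\,d\mu = -\ID$) produces
\[
I(\mu\mid\gamma) = I(\mu\mid\mathcal L) - 2n + \E_\mu[|x|^2], \qquad \mathcal I(\mu\mid\gamma) = \mathcal I(\mu\mid\mathcal L) - 2\ID + \E_\mu[x^{\otimes 2}].
\]
Substituting the first two identities into the definition of $\D(\mu)$, the $\E_\mu[|x|^2]$ terms cancel and one is left with
\[
\D(\mu) = \tfrac12 I(\mu\mid\mathcal L) - \tfrac n2 - \bigl[H(\mu\mid\mathcal L) - H(\gamma\mid\mathcal L)\bigr].
\]

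Next I would plug in Theorem~\ref{thm:fisherEig} to bound the bracketed term above by $\tfrac12\log\det\mathcal I(\mu\mid\mathcal L)$. Writing $m_1,\dots,m_n$ for the eigenvalues of $\mathcal I(\mu\mid\mathcal L)$, this rearranges to
\[
\D(\mu) \geq \tfrac12 \sum_{i=1}^n \bigl(m_i - 1 - \log m_i\bigr) = \tfrac12 \sum_{i=1}^n \Delta(m_i - 1).
\]
So the task reduces to proving $\sum_i \Delta(m_i - 1) \geq \sum_i \Delta(\beta_i)$.

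Now comes the one genuine use of the hypothesis $\E_\mu[x^{\otimes 2}]\preceq \ID$. Using the third identity above,
\[
\mathcal I(\mu\mid\mathcal L) - \ID = \mathcal I(\mu\mid\gamma) + \bigl(\ID - \E_\mu[x^{\otimes 2}]\bigr) \succeq \mathcal I(\mu\mid\gamma),
\]
and both sides are positive semidefinite. The min--max characterization of eigenvalues then gives, with both spectra ordered decreasingly, $m_i - 1 = \lambda_i\bigl(\mathcal I(\mu\mid\mathcal L)-\ID\bigr) \geq \lambda_i\bigl(\mathcal I(\mu\mid\gamma)\bigr) = \beta_i \geq 0$ for every $i$. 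Since $\Delta$ is increasing on $[0,\infty)$ (as $\Delta'(t) = t/(1+t) \geq 0$ there), we can apply $\Delta$ coordinatewise and sum to obtain the desired inequality.

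The main obstacle, such as it is, lies in the eigenvalue comparison step: one must see that it is the shifted matrix $\mathcal I(\mu\mid\mathcal L) - \ID$, rather than $\mathcal I(\mu\mid\mathcal L)$ itself, whose eigenvalues should be compared with the $\beta_i$, which is precisely what the integration-by-parts identity dictates and what the covariance bound makes possible. All remaining computations are routine.
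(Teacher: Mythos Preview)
Your proof is correct and follows essentially the same route as the paper: rewrite Theorem~\ref{thm:fisherEig} as $\D(\mu)\ge \tfrac12\sum_i \Delta(\alpha_i-1)$ with $\alpha_i$ the eigenvalues of $\mathcal I(\mu\mid\mathcal L)$, then use the integration-by-parts identity $\mathcal I(\mu\mid\mathcal L)-\ID=\mathcal I(\mu\mid\gamma)+\ID-\E_\mu[x^{\otimes2}]$ together with the covariance hypothesis to get $\mathcal I(\mu\mid\mathcal L)-\ID\succeq \mathcal I(\mu\mid\gamma)\succeq 0$, and conclude by monotonicity of $\Delta$ on $[0,\infty)$. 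The only addition on your side is making the eigenvalue comparison explicit via the min--max principle, which the paper leaves implicit.
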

Again, by concavity of the logarithm, \eqref{eq:fisherEig2} is a strict improvement on \eqref{eq:lsiDimCov}.

To see how Corollary~\ref{cor:fisherEig} follows from Theorem~\ref{thm:fisherEig}, 
note that~\eqref{eq:fisherEig1} can be rewritten as
\begin{equation}\label{eq:fisherEig3}
\D ( \mu ) \geq \frac 12 \sum_{i=1}^n \Delta ( \alpha_i - 1 )  ,   
\end{equation}
where $\alpha_1,\dotsc,\alpha_n$ are the eigenvalues of the Fisher 
information matrix of $\mu$ with respect to the Lebesgue measure. 
Using the integration by parts identity
\[
\mathcal I ( \mu \mid \mathcal L ) - \ID = \mathcal I ( \mu \mid \gamma ) + 
\ID - \E_\mu [ x^{\otimes 2} ]  
\]
we see that if $\E_\mu [ x^{\otimes 2} ] \preceq \ID$, then 
\[
\mathcal I ( \mu \mid \mathcal L ) - \ID \succeq \mathcal I ( \mu \mid \gamma ) \succeq 0 . 
\] 
Since $\Delta$ is increasing on $[0,+\infty)$, the inequality~\eqref{eq:fisherEig2} thus follows 
from~\eqref{eq:fisherEig3}. 
\begin{remark}
Corollary \ref{cor:fisherEig} bears an interesting formal resemblance to the following result. Let $T$ be the Brenier map from $\mu$ to $\gamma$ and let $\{\kappa_i(x)\}_{i=1}^n$ be the eigenvalues of the map $DT(x)-\ID$. Then it can be shown \cite{cordero} that 
\begin{equation*}
\D ( \mu ) \geq\sum_{i=1}^n\E_{\mu}[\Delta(\kappa_i)].
\end{equation*}
For further appearances of the map $\Delta$ as a cost function in transportation distance, see \cite{BGRS}.
\end{remark}  

Let us note that although Theorem \ref{thm:fisherEig} (and thus Corollary \ref{cor:fisherEig}) follow from a simple scaling argument (see section \ref{sec:self}), it is arguably the only natural dimension-free stability result that has minimal assumptions on $\mu$. To the best of our knowledge, the only other known dimension-free estimates of the form of Corollary \ref{cor:fisherEig} are the results of \cite{FIL}, which impose strong conditions of the measure $\mu$, namely that it satisfies a Poincar\'e inequality. 

\subsection{Covariance and Gaussian mixtures} As we saw, in order to get stability estimates for the deficit from the self-improvements of the log-Sobolev inequality, we need to assume that $\E_\mu [\vert x\vert^2]\le n$. The phenomenon that the size of $\C(\mu)$ serves as a watershed for stability estimates has already been observed in the literature, but the precise connection has remained unclear. Indeed, \cite{LNP} raises the question regarding the relation between the distance of the covariance of $\mu$ from the identity, and the possible lower bounds on the deficit. Our next result completely settles this question. 
\begin{theorem}\label{thm:cov}
Let $\mu$ be a probability measure on $\R^n$ and let $\lambda:=\{\lambda_i\}_{i=1}^n$ be the eigenvalues of ${\rm cov}(\mu)$.  Then 
\begin{equation}\label{eq:cov}
\D ( \mu ) \geq \frac 12\sum_{i=1}^n \mathbf{1}_{ \{\lambda_i < 1 \} } ( \lambda_i^{-1} - 1 + \log  \lambda_i ).
\end{equation}
In particular, if $\C(\mu) \preceq \ID$, then
\[
\D(\mu) \geq   \frac 12\sum_{i=1}^n ( \lambda_i^{-1} - 1 + \log  \lambda_i )=\D (\gamma_\lambda)
\]
where $\gamma_\lambda$ is a Gaussian measure on $\R^n$ having the same covariance matrix as $\mu$.

On the other hand, this becomes completely wrong if we remove the hypothesis on the covariance matrix,
even in dimension $1$: there exists a sequence $(\mu_k)$ of mixtures of Gaussian measures on $\R$ such that $\mathrm{var}(\mu_k) \to \infty$ while $\D ( \mu_k )\to 0$.
\end{theorem}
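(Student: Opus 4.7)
The plan for the first assertion is to chain together three tools. Starting from the already-derived inequality~\eqref{eq:fisherEig3},
\[
\D(\mu) \geq \frac12 \sum_{i=1}^n \Delta(\alpha_i - 1),
\]
where $\alpha_1, \dots, \alpha_n$ are the eigenvalues of $\mathcal I(\mu \mid \mathcal L)$, I will relate these to the covariance eigenvalues via the matrix Cram\'er--Rao inequality $\mathcal I(\mu \mid \mathcal L) \succeq \C(\mu)^{-1}$. The latter is standard: one applies a matrix Cauchy--Schwarz to the integration-by-parts identity $\int (x - \E_\mu x) \otimes \nabla \log \mu\, d\mu = -\ID$. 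Weyl's monotonicity of eigenvalues then yields $\alpha_i \geq 1/\lambda_i$ in matched (decreasing) order.

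The elementary fact that closes the argument is that $\Delta$ is nonnegative on $(-1, \infty)$ and increasing on $[0, \infty)$. Hence index by index: if $\lambda_i \leq 1$ then $\alpha_i - 1 \geq 1/\lambda_i - 1 \geq 0$ and so $\Delta(\alpha_i - 1) \geq \Delta(1/\lambda_i - 1) = 1/\lambda_i - 1 + \log \lambda_i$; otherwise $\Delta(\alpha_i - 1) \geq 0$, which matches the indicator in~\eqref{eq:cov}. Summing yields~\eqref{eq:cov}. For the ``in particular'' clause, the hypothesis $\C(\mu) \preceq \ID$ makes every indicator equal to $1$, and a direct calculation (diagonalize $\C(\mu)$ using the orthogonal invariance of $\gamma$, then tensorize) identifies the right-hand side as $\D(\gamma_\lambda)$.

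For the counterexample, I will take the two-component mixtures
\[
\mu_k := (1 - \epsilon_k)\, \gamma + \epsilon_k\, N(a_k, 1) \qquad \text{on } \R,
\]
with $\epsilon_k \to 0$ and $\epsilon_k a_k^2 \to \infty$, say $\epsilon_k = 1/k$ and $a_k = k$. A one-line calculation gives $\mathrm{var}(\mu_k) = 1 + \epsilon_k(1 - \epsilon_k) a_k^2 \to \infty$. To show $\D(\mu_k) \to 0$, I will study the density $f_k := d\mu_k / d\gamma = (1 - \epsilon_k) + \epsilon_k e^{a_k x - a_k^2/2}$, which is close to $1 - \epsilon_k$ on the bulk region $\{x \ll a_k/2\}$ and close to $\epsilon_k e^{a_k x - a_k^2/2}$ on the bump region $\{x \gg a_k/2\}$. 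Splitting the integrals defining $H(\mu_k \mid \gamma)$ and $I(\mu_k \mid \gamma)$ at the crossover point $x = a_k/2$ and using standard Gaussian tail bounds should yield asymptotics of the form
\[
H(\mu_k \mid \gamma) = \tfrac12 \epsilon_k a_k^2 + O\!\bigl(\epsilon_k \log(1/\epsilon_k)\bigr), \qquad I(\mu_k \mid \gamma) = \epsilon_k a_k^2 + O(\epsilon_k),
\]
so that $\D(\mu_k) = O\!\bigl(\epsilon_k \log(1/\epsilon_k)\bigr) \to 0$.

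The main obstacle is precisely this asymptotic analysis. Both $H(\mu_k \mid \gamma)$ and $\tfrac12 I(\mu_k \mid \gamma)$ grow at the common rate $\epsilon_k a_k^2$, and only the subleading corrections in their difference tend to zero; these corrections therefore need to be tracked carefully. On the bulk side, $f_k$ is close to $1 - \epsilon_k$ but with errors that grow as $x$ approaches $a_k/2$, while on the bump side $f_k$ is very large but supported on a set of small $\gamma$-mass, which forces a delicate cancellation between $\log \epsilon_k$, $a_k x - a_k^2/2$ and $\epsilon_k$ contributions. The first assertion, by contrast, is essentially a bookkeeping exercise once Theorem~\ref{thm:fisherEig}, matrix Cram\'er--Rao and Weyl's monotonicity are in place.
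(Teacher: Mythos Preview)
Your argument for the inequality~\eqref{eq:cov} and the ``in particular'' clause is correct and coincides with the paper's first proof (Section~\ref{sec:self}): inequality~\eqref{eq:fisherEig3}, the Cram\'er--Rao bound~\eqref{eq:CR}, and monotonicity of $x\mapsto\mathbf 1_{\{x>1\}}(x-1-\log x)$ on $[0,\infty)$. The paper also gives a second, independent proof via the F\"ollmer process (Section~\ref{sec:comp}), but your route matches the first one essentially verbatim.

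For the counterexample your strategy would work, but it is much harder than necessary, and the ``main obstacle'' you flag is entirely avoidable. The paper does not compute the integrals at all; it bounds the deficit of a mixture directly via two convexity inequalities (Lemma~\ref{lem:mixDef}): convexity of the Fisher information,
\[
I\bigl((1-t)\mu+t\nu\mid\gamma\bigr)\le (1-t)\,I(\mu\mid\gamma)+t\,I(\nu\mid\gamma),
\]
and the pointwise bound $((1-t)f+tg)\log((1-t)f+tg)\ge (1-t)f\log((1-t)f)+tg\log(tg)$, which integrates to
\[
H\bigl((1-t)\mu+t\nu\mid\gamma\bigr)\ge (1-t)\,H(\mu\mid\gamma)+t\,H(\nu\mid\gamma)+\varphi(t),
\]
with $\varphi(t)=t\log t+(1-t)\log(1-t)$. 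Subtracting gives $\D((1-t)\mu+t\nu)\le (1-t)\D(\mu)+t\D(\nu)-\varphi(t)$. Taking $\mu=\gamma$, $\nu=N(a_k,1)$ (both with zero deficit) and $t=\epsilon_k$ yields $\D(\mu_k)\le -\varphi(\epsilon_k)\to 0$ with no asymptotic analysis whatsoever; this is exactly the Shannon-entropy bound of Proposition~\ref{prop:shannon}. Your proposed splitting at $x=a_k/2$ would eventually recover the same $O(\epsilon_k\log(1/\epsilon_k))$ bound, but only after tracking cancellations between two diverging quantities, whereas the convexity argument makes the cancellation automatic.
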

The moral of Theorem \ref{thm:cov} is, that if $\C(\mu) \preceq \ID$, then the deficit $\D ( \mu )$ controls the distance of ${\rm cov}(\mu)$ to the identity. For example, a weaker bound which can be deduced from \eqref{eq:cov} using $\frac{1}{x}-1+\log x\ge \frac{1}{2}(x-1)^2$ for $x\in(0,1]$ is,
\[
\D ( \mu )\ge\frac{1}{4}\Vert {\rm cov}(\mu)- \ID \Vert_{HS}^2,
\]
where the norm on the right hand side is the Hilbert-Schmidt norm. 
On the other hand, if the covariance of $\mu$ is not a priori bounded by the identity, then one can have an arbitrarily small deficit with arbitrarily large variance.

\begin{remark}
Theorem \ref{thm:cov} can also be phrased as a statement about minimizing the deficit subject to a covariance constraint. For simplicity let us consider the one-dimensional situation. Fix a scalar $\sigma>0$. Of all distributions $\mu$ with variance $\sigma$, which one minimizes $\D(\mu)$? Theorem \ref{thm:cov} shows that the answer is dramatically different depending on whether or not $\sigma$ is greater than 1. (If $\sigma=1$ then obviously $\mu=\gamma$ minimizes $\D(\mu)$.) If $\sigma<1$, then the minimizer is the Gaussian measure 
with variance $\sigma$. On the other hand, if $\sigma>1$, then by taking $\mu$ to be an appropriate mixture of Gaussians, we can make $\D(\mu)$ smaller than the Gaussian with variance $\sigma$.
\end{remark}

The Gaussian mixtures in Theorem \ref{thm:cov} served as counterexamples to stability estimates in terms of the distance of $\C(\mu)$ from the identity. In fact, such mixtures show the impossibility of many other stability estimates: 

\begin{theorem}\label{thm:unstable}
For $m\in\R^n$ let $\gamma_{m,\rm Id}$ be the Gaussian measure centered at $m$ with identity covariance matrix. There exists a sequence $(\mu_k)$ of probability measures on $\R$, each of which is a mixture of two 
Gaussian measures of variance $1$, satisfying $\D ( \mu_k ) \to 0$ and 
\[
\lim_{k\to \infty} \inf_{m\in \R} \left\{  \W_1 ( \mu_k , \gamma_{m,\rm Id} ) \right\} = +\infty . 
\]
Additionally, there exists a sequence of dimensions $n(k) \uparrow +\infty$ 
and a sequence $(\mu_k)$ of \textbf{isotropic} (i.e. centered with identity as covariance) measures on $\R^{n(k)}$, satisfying $\D ( \mu_k ) = O ( n(k)^{-1/3} )\to 0$ and 
\[
\inf_{m\in \R^{n(k)}} \left\{ \W_2 ( \mu_k , \gamma_{m,\rm Id} ) \right\} 
= \Omega ( n(k)^{1/6} ) \to  +\infty. 
\]
\end{theorem}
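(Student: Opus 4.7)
The plan is to exhibit explicit Gaussian-mixture counterexamples in both parts, the second one obtained by tensorisation. My starting point is the pair of identities
\[
H(\mu\mid\gamma) = H(\mu\mid\mathcal L) + \tfrac{1}{2}\log(2\pi) + \tfrac{1}{2}\E_\mu[|X|^2],
\]
\[
I(\mu\mid\gamma) = I(\mu\mid\mathcal L) - 2n + \E_\mu[|X|^2],
\]
valid for any absolutely continuous $\mu$ on $\R^n$ (the second by integration by parts), which let me trade the Gaussian reference for the Lebesgue one when computing entropy and Fisher information of mixtures.

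For the one-dimensional statement, I would take $\mu_k = (1-\epsilon_k)\gamma_{0,1} + \epsilon_k\gamma_{a_k,1}$ with $\epsilon_k\to 0$ slowly and $a_k\to\infty$ fast (for instance $\epsilon_k = 1/\log k$ and $a_k = k$). Because $a_k \gg \sqrt{\log(1/\epsilon_k)}$, the two unit-variance bumps are well separated, so $I(\mu_k\mid\mathcal L) = 1 + o(1)$ and $H(\mu_k\mid\mathcal L) = H(\gamma\mid\mathcal L) - H_b(\epsilon_k) + o(1)$, with $H_b(p) := -p\log p - (1-p)\log(1-p)$ the binary entropy. Substituting into the identities above, the $\tfrac12 \epsilon_k a_k^2$ contributions coming from $\E_{\mu_k}[X^2]$ cancel between $\tfrac12 I$ and $H$, leaving $\D(\mu_k) = H_b(\epsilon_k) + o(1) \to 0$. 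For the $\W_1$ lower bound, I would use the quantile representation of $\W_1$ in dimension one: the mass $\epsilon_k$ that $\mu_k$ places near $a_k$ must be transported to the top $\epsilon_k$-quantile of $\gamma_{m,1}$, which lies below $m + C\sqrt{\log(1/\epsilon_k)}$, at a cost at least $\epsilon_k(a_k - m - C\sqrt{\log(1/\epsilon_k)})$, while translating the bulk of $\mu_k$ to mean $m$ costs at least $(1-\epsilon_k)|m|$. Optimising over $m$ gives $\inf_m\W_1(\mu_k,\gamma_{m,\mathrm{Id}}) = \Omega(\epsilon_k a_k) \to \infty$.

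For the isotropic high-dimensional statement, I would take $\mu_k = \nu_k^{\otimes n(k)}$ with $\nu_k$ the 1D mixture
\[
\nu_k = (1-\epsilon_k)\gamma_{a_k,\sigma_k^2} + \epsilon_k\gamma_{b_k,\sigma_k^2},
\]
where $a_k = -\epsilon_k b_k/(1-\epsilon_k)$ and $\sigma_k^2 = 1 - \epsilon_k b_k^2/(1-\epsilon_k)$ are chosen so that $\nu_k$ has mean $0$ and variance $1$. Since relative entropy, relative Fisher information and $\W_2^2$ all tensorise, and since the optimal Gaussian translate for $\W_2$ against a centred measure is the zero-mean one, this reduces the problem to estimating $\D(\nu_k)$ and $\W_2^2(\nu_k,\gamma_{0,1})$ and multiplying by $n(k)$. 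Repeating the computation from the first part, now tracking the $\sigma_k$ corrections (which enter $H(\nu_k\mid\mathcal L)$ as $-\tfrac12\log\sigma_k^2$ and $I(\nu_k\mid\mathcal L)$ as $1/\sigma_k^2$, and cancel against $\E_{\nu_k}[X^2] = 1$), yields the clean identity
\[
\D(\nu_k) = \tfrac12 \Delta(1/\sigma_k^2 - 1) + H_b(\epsilon_k) + o(\cdot),
\]
with $\Delta$ as defined earlier in the paper, while the small bump forces $\W_2^2(\nu_k,\gamma_{0,1}) \ge \epsilon_k b_k^2(1+o(1))$. Setting $\epsilon_k \asymp n(k)^{-4/3}/\log n(k)$ and $b_k^2 \asymp n(k)^{2/3}\log n(k)$ gives $1/\sigma_k^2 - 1 \asymp n(k)^{-2/3}$, hence $\Delta(1/\sigma_k^2-1) = O(n(k)^{-4/3})$ and $H_b(\epsilon_k) = O(n(k)^{-4/3})$; multiplying by $n(k)$ yields the claimed rates $\D(\mu_k) = O(n(k)^{-1/3})$ and $\W_2(\mu_k,\gamma_{0,\mathrm{Id}}) = \Omega(n(k)^{1/6})$.

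The delicate step is the well-separated-bumps approximation for $H(\nu\mid\mathcal L)$ and $I(\nu\mid\mathcal L)$ at the precision required for the cancellations to materialise: in Part 1 one needs $o(H_b(\epsilon_k))$ accuracy for the deficit to be exactly at the order $H_b(\epsilon_k)$, and in Part 2 one needs errors small compared to $n(k)^{-4/3}$. I would control this by splitting every integral at the "valley" of the mixture density and bounding the cross contributions via the explicit Gaussian tails of each component; since the two bumps are separated by a distance greatly exceeding $\sqrt{\log(1/\epsilon_k)}$, these cross terms decay superpolynomially in $\epsilon_k$ and are negligible on every relevant scale.
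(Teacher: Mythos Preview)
Your construction is the same as the paper's---mixtures of two Gaussians, with tensorisation for the isotropic part---and your Wasserstein lower bounds are essentially the paper's Lemma~\ref{lem:w1} in quantile language, so the proposal is correct.

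The one genuine methodological difference is how you bound the deficit from above. You compute $H(\mu\mid\mathcal L)$ and $I(\mu\mid\mathcal L)$ asymptotically under a well-separated-bumps approximation and then invoke the cancellation of the $\tfrac12\E|X|^2$ terms; this is correct but, as you note, forces you to control the cross terms at each of the scales $H_b(\epsilon_k)$ and $n(k)^{-4/3}$. The paper avoids this entirely: it proves the non-asymptotic inequality
\[
\D\bigl((1-t)\mu+t\nu\bigr)\le (1-t)\,\D(\mu)+t\,\D(\nu)+H_b(t),
\]
from convexity of the Fisher information together with the elementary pointwise bound $((1-t)f+tg)\log((1-t)f+tg)\ge (1-t)f\log((1-t)f)+tg\log(tg)$, and then plugs in the exact value $\D(\gamma_{0,\sigma})=\tfrac12(\sigma^{-1}-1+\log\sigma)$. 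This yields exactly your formula $\D(\nu_k)\le \tfrac12\Delta(\sigma_k^{-2}-1)+H_b(\epsilon_k)$ with no error term and no separation hypothesis, and it is what gives Proposition~\ref{prop:shannon} as an immediate corollary. Your route reaches the same destination, but the convexity argument is both shorter and sharper, and it removes the ``delicate step'' you flagged.
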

The first statement shows that the log-Sobolev inequality is unstable for $\W_1$, even in dimension $1$. 
The second statement shows that even for isotropic measures, there is no dimension-free 
stability result for $\W_2$. Note however that our second counterexample does not work for $\W_1$; 
as far as we know it could still be the case that $\D(\mu) \geq c \W_1 (\mu,\gamma)^2$ for 
every isotropic $\mu$ on $\R^n$. (Recall that by Jensen's inequality we have $\W_1(\mu,\nu)\leq \W_2 (\mu,\nu)$.)
Explicit counterexamples to stability were discussed recently in the literature, see \cite{kim}. These examples however are complicated and require a lot of tedious computations while ours are completely elementary. We just observe that Gaussian mixtures have small log-Sobolev deficit, see Proposition \ref{prop:shannon} below. Similar Gaussian mixture examples can be found in the context of stability of the entropy power inequality, see \cite{CFP} and references therein.

\begin{proposition}\label{prop:shannon}
Let $p$ be a discrete measure on $\R^n$ and let $S(p) = -\sum p(x) \log p(x)$ be its Shannon entropy. 
Then 
\[
\D( p * \gamma ) \leq S(p) . 
\]
\end{proposition}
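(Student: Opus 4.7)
The plan is to exploit the fact that every translate of $\gamma$ saturates the log-Sobolev inequality, and combine this with convexity properties of the entropy and Fisher information under mixtures. Write the discrete measure as $p = \sum_i p_i \D_{x_i}$ (abusing notation: $\delta$ here is the Dirac mass, not the deficit) and set $\gamma_i := \gamma_{x_i,\mathrm{Id}}$, so that $\nu := p * \gamma = \sum_i p_i \gamma_i$. A direct computation using $\log(d\gamma_i / d\gamma)(y) = y\cdot x_i - |x_i|^2/2$ gives $H(\gamma_i \mid \gamma) = |x_i|^2/2$ and $I(\gamma_i \mid \gamma) = |x_i|^2$, so that $\D(\gamma_i) = 0$ for every~$i$.

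For the entropy side, introduce $X \sim p$ and $Z \sim \gamma$ independent, so that $Y := X+Z \sim \nu$. The chain-rule identity for relative entropy yields
$$\sum_i p_i H(\gamma_i \mid \gamma) - H(\nu \mid \gamma) = I(X;Y),$$
where $I(X;Y)$ denotes the Shannon mutual information between $X$ and $Y$. Since $X$ is discrete with entropy $S(p)$, one has $I(X;Y) \leq H(X) = S(p)$, whence
$$H(\nu \mid \gamma) \geq \sum_i p_i H(\gamma_i \mid \gamma) - S(p).$$

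For the Fisher information, I would invoke convexity of $I(\cdot \mid \gamma)$ under mixing: writing $g_i := d\gamma_i/d\gamma$ and $g := \sum_i p_i g_i = d\nu/d\gamma$, the pointwise Cauchy--Schwarz inequality
$$\Bigl|\sum_i p_i \nabla g_i\Bigr|^2 \leq \Bigl(\sum_i p_i g_i\Bigr)\Bigl(\sum_i p_i |\nabla g_i|^2/g_i\Bigr),$$
followed by division by $g$ and integration against $\gamma$, delivers $I(\nu \mid \gamma) \leq \sum_i p_i I(\gamma_i \mid \gamma)$.

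Combining the three inputs,
$$\D(\nu) = \tfrac{1}{2} I(\nu \mid \gamma) - H(\nu \mid \gamma) \leq \sum_i p_i \D(\gamma_i) + I(X;Y) \leq 0 + S(p),$$
which is the claimed bound. I do not anticipate any serious obstacle: once one recognises that the deficit vanishes on each Gaussian component, all the deficit of the mixture is ``charged'' to the mutual information between the mixing variable and the noisy observation, itself controlled by $S(p)$. The only minor technicality is justifying the interchange of sum and integral when $p$ has countably infinite support, handled by truncation and monotone convergence.
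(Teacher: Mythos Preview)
Your proof is correct and follows essentially the same route as the paper: both arguments combine the convexity of the Fisher information with a lower bound on the entropy of the mixture that loses exactly $S(p)$ relative to the convex combination $\sum_i p_i H(\gamma_i\mid\gamma)$, and then use $\D(\gamma_i)=0$. The only difference is cosmetic: for the entropy step the paper applies the pointwise inequality $\bigl(\sum_i p_i g_i\bigr)\log\bigl(\sum_i p_i g_i\bigr)\ge \sum_i p_i g_i\log(p_i g_i)$ (monotonicity of $\log$) and integrates, whereas you phrase the same loss as the mutual information $I(X;Y)$ and bound it by $H(X)=S(p)$; unwinding the definitions shows these are the same inequality.
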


\subsection{Decompositions into mixtures}
If we take stock of the results in the  preceding sections, we see that while a result of the form 
\[
\D(\mu)\geq \frac{c}{n}\W_2^4(\mu,\gamma)
\] 
holds under the assumption that $\E_\mu[\vert x\vert^2]\le n$, we cannot replace the right hand side by $\frac{c'}{\sqrt{n}}\W_2^3(\mu,\gamma)$, let alone $c''\W_2^2(\mu,\gamma)$. (These bounds increase in strength since $\W_2^2(\mu,\gamma)\leq 2n$ under the assumption $\E_\mu[\vert x\vert^2]\le n$.) As we saw, mixtures of Gaussians pose counterexamples to such bounds. Our next result shows that in a certain sense, these counterexamples are the only obstacles.
\begin{theorem}\label{thm:dim}
Let $\mu$ be a probability measure on $\R^n$. Then there exists a measure $\nu$ on $\R^n$ such that
\begin{equation}\label{eq:dimension}
\D ( \mu ) \geq \frac1{15}  \frac{\W_2^3 ( \mu , \nu * \gamma ) }{ \sqrt n },
\end{equation}
and so that $\nu$ is a Dirac point mass whenever $\D(\mu)=0$.
\end{theorem}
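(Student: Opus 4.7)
The approach is to use the Föllmer process $(X_t)_{0 \le t \le 1}$ associated with $\mu$, i.e., the solution of $dX_t = u_t\,dt + dB_t$ with $X_0 = 0$ and $X_1 \sim \mu$. A standard property is that the Föllmer drift is a martingale in the Brownian filtration, with Itô decomposition $du_t = v_t\,dB_t$ for a predictable matrix-valued process $v_t$. Combining $H(\mu\mid\gamma) = \tfrac12 \E\int_0^1 |u_t|^2\,dt$, $I(\mu\mid\gamma) = \E|u_1|^2$, the martingale identity $\E|u_1|^2 - \E|u_t|^2 = \E\int_t^1 \|v_s\|_{HS}^2\,ds$, and Fubini, yields the central representation
\[
\D(\mu) = \frac12 \E \int_0^1 s\, \|v_s\|_{HS}^2 \, ds,
\]
which is the engine of the proof.

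My candidate for $\nu$ will be the law of the conditional mean $Y_t := \E[X_1 \mid \mathcal F_t] = X_t + (1-t) u_t$, at a time $t \in (0,1)$ to be optimized. The key observation is that $G := (B_1 - B_t)/\sqrt{1-t}$ is an $N(0,\ID)$ vector independent of $\mathcal F_t$, hence of $Y_t$, so $(X_1, Y_t + G)$ is a valid coupling between $\mu$ and $\nu * \gamma$. A stochastic Fubini gives the clean formula $X_1 - Y_t = \int_t^1 [I + (1-r) v_r]\,dB_r$; subtracting $G = \int_t^1 (1-t)^{-1/2} I\,dB_r$ produces
\[
X_1 - Y_t - G = \int_t^1 \bigl[(1 - (1-t)^{-1/2})\, I + (1-r)\, v_r\bigr]\,dB_r.
\]
The Itô isometry combined with $(a+b)^2 \le 2a^2 + 2b^2$, the identity $(1-t)(1 - (1-t)^{-1/2})^2 = (1 - \sqrt{1-t})^2 \le t^2$, and the deficit inequality $\int_t^1 (1-r)^2 \E\|v_r\|_{HS}^2\,dr \le t^{-1} \int_0^1 r\, \E\|v_r\|_{HS}^2\,dr = 2\D(\mu)/t$, then combine to give
\[
\W_2^2(\mu, \nu * \gamma) \le 2 n t^2 + \frac{4 \D(\mu)}{t}.
\]

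It remains to optimize in $t$. In the main regime $\D(\mu) \le n$, the choice $t = (\D(\mu)/n)^{1/3}$ gives $\W_2^2 \le 6\, n^{1/3}\, \D(\mu)^{2/3}$, whence $\W_2^3 \le 6\sqrt 6\, \sqrt n\, \D(\mu) < 15 \sqrt n\, \D(\mu)$, which is the stated bound. The complementary regime $\D(\mu) > n$ is handled trivially by taking $\nu = \mu$ with an independent Gaussian, yielding $\W_2^2 \le n$ and thus $\W_2^3/\sqrt n \le n \le \D(\mu) < 15 \D(\mu)$. Finally, when $\D(\mu) = 0$, $v$ vanishes identically and so $u_t \equiv u_0$ is constant; at $t \to 0^+$, $Y_0 = u_0 \in \R^n$ is deterministic, so $\nu = \delta_{u_0}$ is a Dirac mass, as required. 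The main obstacle will be conceptual rather than technical: the delicate point is guessing the right $\nu$ (the law of the Föllmer conditional mean) and the right coupling (via the residual Brownian increment). Once these are in hand, the $s$-weighted representation of $\D(\mu)$ and a one-parameter optimization deliver the exponent $1/3$ and the dimensional factor $n^{1/6}$ almost automatically.
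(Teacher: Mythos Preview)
Your proof is correct and follows essentially the same approach as the paper's: both take $\nu$ to be the law of the conditional mean $\E[X_1\mid\mathcal F_t]$ under the F\"ollmer process, bound $\W_2(\mu,\nu*\gamma)$ by a deficit term of order $\D(\mu)/t$ plus a dimensional term of order $nt^2$, and optimize at $t=(\D(\mu)/n)^{1/3}$ (handling $\D(\mu)>n$ via $\nu=\mu$). The only cosmetic difference is that the paper couples $X_1$ with $\E[X_1\mid\mathcal F_t]+(B_1-B_t)$ and then pays $\W_2(\gamma_{0,1-t},\gamma)\le t\sqrt n$ via the triangle inequality, whereas you couple directly with $\E[X_1\mid\mathcal F_t]+(B_1-B_t)/\sqrt{1-t}$ and absorb both error terms into a single It\^o isometry computation; the resulting constants are $(\sqrt2+1)^3\approx14.07$ and $6\sqrt6\approx14.7$ respectively, both below $15$.
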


In fact, that a small deficit implies that $\mu$ is close to being a mixture of Gaussians, is an implication which comes out naturally from our stochastic proof technique as we will see below. The relation between approximate equality in the log-Sobolev inequality and proximity to mixtures of \emph{product} measures, appears in a recent work of Austin \cite{austin} in a more abstract setting of product spaces. Given Theorem \ref{thm:dim} and Proposition~\ref{prop:shannon} we pose the following question.
\begin{question}\label{qu:mainQuestion}
Given a probability measure $\mu$ on $\R^n$, is it true that there exists 
a discrete probability measure $p$ on $\R^n$ satisfying $S(p)\leq C \, \D(\mu)$ and  
\[ 
\W_2^2 ( \mu , p * \gamma ) \leq C \, \D ( \mu ), 
\]
where $C$ is a universal constant?
\end{question} 
Note that both sides of the inequality above behave additively when taking tensor products.
The inequality is thus completely dimension-free, which is our main motivation for it. 

\begin{remark}
While the Wasserstein distance is a bona fide distance between probability measures, in the context of the log-Sobolev inequality it seems more natural to work with lower bounds which are expressed in terms of relative entropy and relative Fisher information. Thus one may wonder, whether it is possible to replace the lower bound on the deficit in Question \ref{qu:mainQuestion} by the relative entropy or Fisher information between $\mu$ and a mixture of Gaussians. We focus on the Wasserstein two-distance distance since by the log-Sobolev and Talagrand's inequalities such results are weaker. Moreover, our decomposition results are easier to prove for the Wasserstein distance. 
\end{remark}

As a step towards answering this question, we prove that an estimate similar in spirit does indeed hold. We show that a random vector distributed like $\mu$, can be written as the sum of two random vectors which are orthogonal in expectation, one of which is close to a Gaussian in a dimension-free way. 
\begin{theorem} \label{thm:uncor}
Let  $\mu$ be a probability measure on $\R^n$ and let $X \sim \mu$. There exists a decomposition $X \stackrel{D}{=} Y + W$ with the property that $\E[ \langle Y, W \rangle ]= 0$, such that
\[
\D(\mu)\geq \frac 12\W_2^2 (\nu, \gamma)
\]
where $Y\sim\nu$. 
\end{theorem}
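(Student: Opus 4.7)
\medskip

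The plan is to build the decomposition from the Föllmer process associated with $\mu$. Let $f = d\mu/d\gamma$ and consider the process $(X_t)_{t\in[0,1]}$ solving $dX_t = v_t \, dt + dB_t$, $X_0 = 0$, with drift $v_t = \nabla \log P_{1-t} f(X_t)$; then $X_1 \sim \mu$, the drift $v_t$ is a Brownian martingale satisfying $dv_t = A_t \, dB_t$ where $A_t = \nabla v_t$ is a symmetric matrix, and one has the classical identities $H(\mu\mid\gamma) = \tfrac12 \E\!\int_0^1 |v_t|^2 \, dt$ and $I(\mu\mid\gamma) = \E|v_1|^2$. Combining these via Itô isometry yields the clean deficit representation
\[
\D(\mu) \;=\; \tfrac12 \E \int_0^1 t\, \|A_t\|_{HS}^2\, dt,
\]
together with the Itô form of the endpoint
\[
X_1 \;=\; v_0 + \int_0^1 \bigl[I + (1-t) A_t\bigr] \, dB_t.
\]

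The natural candidate, motivated by matching the $t$-weighting in the deficit formula, is to set $Y := \int_0^1 [I + t A_t] \, dB_t$ and $W := X_1 - Y = v_0 + \int_0^1 (1 - 2t)\, A_t \, dB_t$. The point of this choice is that $Y$ deviates from the standard Gaussian $B_1$ by exactly $\int_0^1 t A_t \, dB_t$, so the Itô isometry gives immediately
\[
\W_2^2(\nu,\gamma) \;\leq\; \E|Y - B_1|^2 \;=\; \E \int_0^1 \|t A_t\|_{HS}^2 \, dt \;\leq\; \E \int_0^1 t\, \|A_t\|_{HS}^2\, dt \;=\; 2\, \D(\mu),
\]
where $\nu$ is the law of $Y$, using the coupling $(Y, B_1)$ built on the same Brownian motion.

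The delicate and forward-facing step, which I expect to be the main obstacle, is enforcing the orthogonality $\E\langle Y, W\rangle = 0$. A direct Itô computation gives
\[
\E\langle Y, W\rangle \;=\; \int_0^1 (1-2t)\, \E[\TR A_t]\, dt + \int_0^1 t(1-2t)\, \E\|A_t\|_{HS}^2 \, dt,
\]
which need not vanish for the bare choice above (it does vanish in the trivial Gaussian case where $A_t = 0$). I would address this by either (i) enlarging the probability space with an independent standard Gaussian $G \sim \gamma$ and replacing $Y$ by $Y + cG$ with $c$ a small scalar tuned to kill the residual cross-term, so that the Wasserstein coupling incurs an additional cost of only $c^2 n$; or (ii) replacing the weight $t$ by an adapted or deterministic weight $h_t$ satisfying $\int_0^1 h_t^2 \leq \int_0^1 t$ with the orthogonality condition folded into the choice of $h$. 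In either case the extra cost must be checked to be absorbable into $2\D(\mu)$, which is plausible since the cross-term is itself controlled by $\E \int t \|A_t\|_{HS}^2 \, dt$ via Cauchy--Schwarz.

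Finally, the translation invariance of both sides (the deficit, and a decomposition shifted by a constant) lets one reduce to the centered case $v_0 = 0$, so that the Gaussian target is $\gamma$ itself and not a translate; the mean $v_0$ is carried by $W$ without affecting orthogonality, since $Y$ can be arranged to be centered.
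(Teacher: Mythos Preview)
Your setup is correct: the deficit formula $\D(\mu)=\tfrac12\int_0^1 t\,\E\|A_t\|_{HS}^2\,dt$ and the endpoint representation $X_1=v_0+\int_0^1[I+(1-t)A_t]\,dB_t$ are both valid, and your candidate $Y=\int_0^1[I+tA_t]\,dB_t$ does satisfy $\W_2^2(\nu,\gamma)\le 2\D(\mu)$ via the coupling with $B_1$. The gap is exactly where you flag it: the orthogonality $\E\langle Y,W\rangle=0$ is not proved, and neither proposed fix closes it. After reducing to the centered case and using $\E[\TR A_t]=\TR\C(\mu)-n-\E|v_t|^2$ together with $\int_0^1(1-2t)\,dt=0$, your cross term simplifies to
\[
\E\langle Y,W\rangle=\int_0^1 s(2-3s)\,\E\|A_s\|_{HS}^2\,ds,
\]
which is negative whenever the mass of $\E\|A_s\|_{HS}^2$ sits near $s=1$ (e.g.\ for measures that look Gaussian at coarse scales but not at fine ones). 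Your fix (i) cannot handle this sign: for independent $G$ and any real $a$, the pair $(Y+aG,\,W-aG)$ has cross term $\E\langle Y,W\rangle-a^2n$, which only moves \emph{downward}. So an independent Gaussian correction can never repair a negative cross term, and even in the positive case it degrades the constant below $\tfrac12$. Fix (ii) is left at the level of a wish: the constraint $h_t^2\le t$ together with the $\mu$-dependent orthogonality equation is not shown to be solvable.

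The paper's argument is structurally different and sidesteps this obstacle. It uses the same representation $X_1=\int_0^1\C(\mu_t)\,dB_t$ (your $I+(1-t)A_t$ equals $\C(\mu_t)$), but instead of a time weight it introduces an adapted \emph{orthogonal projection} $L_t$ and sets $Y=\int_0^1 L_t\C(\mu_t)\,dB_t$, $W=\int_0^1(\ID-L_t)\C(\mu_t)\,dB_t$. Orthogonality is then automatic from $L_t^2=L_t$ via It\^o's isometry, with no residual to tune away. The price is that the Wasserstein bound no longer follows from your clean deficit formula; it requires the covariance deficit estimate $\D(\nu)\ge\tfrac12\TR[(\ID-\C(\nu))_+^2]$ (Theorem~\ref{thm:cov}) applied to each $\mu_t$, together with the recursion $\D(\mu)\ge\int_0^1\E[\D(\mu_t)]\,dt$, and a stopping-time construction of $L_t$ ensuring that the comparison process $Z$ has quadratic variation exactly $\ID$ at time $1$.
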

Theorem \ref{thm:uncor} can be seen as an improvement on \eqref{eq:lsiDimWas}.
Indeed, assume that $\E_\mu [\vert x\vert^2]\le n$.
The theorem implies that 
\[
\W_2 ( \mu ,\gamma ) \leq \W_2 ( \mu , \nu ) + \W_2 ( \nu , \gamma ) 
\leq \E [\vert W\vert^2 ]^{1/2} + \sqrt{ 2 \D(\mu) } . 
\]
Moreover, since $\E[\langle Y,W \rangle] = 0$, we have 
\[
\E [|W|^2] = \E [|X|^2] - \E [|Y|^2] \leq n - \E[|Y|^2] .
\]
If $\D( \mu ) \geq C n$, then~\eqref{eq:lsiDimWas} holds trivially, so 
we can assume additionally that $\D( \mu ) = O (n)$. Then by the theorem $\W_2 ( \nu , \gamma ) = O ( \sqrt n )$
and thus 
\[
\E [ \vert Y\vert^2 ] \geq n - C \sqrt n \, \W_2 ( \nu , \gamma )  \geq n - C \sqrt {2 n \D(\mu) } . 
\]
Putting everything together, we get 
\[
\W_2(\mu, \gamma) \leq \sqrt{ 2 \D(\mu)} + C'\, n^{1/4} \D(\mu)^{1/4} \leq C'' \, n^{1/4} \D(\mu)^{1/4} ,
\]
which is~\eqref{eq:lsiDimWas}.

\subsection{Methods}

We provide two sets of proofs for Theorems \ref{thm:fisherEig} and \ref{thm:cov}. The first set of proofs proceeds by establishing Theorem \ref{thm:fisherEig} via a scaling argument, and then deduces the first part of Theorem \ref{thm:cov} from Theorem~\ref{thm:fisherEig} via the Cram\'er-Rao bound. The second set of proofs uses a stochastic process known as the \emph{Schr\"odinger bridge},  or the \emph{F\"ollmer process}, depending on the context. This process is entropy-minimizing and is thus suitable for the logarithmic Sobolev inequality. For example, it is used in~\cite{lehec} to give a simple proof of the log-Sobolev inequality (see section~\ref{sec:follmer}), and in~\cite{eldan1} to obtain a \emph{reversed} form (see also~\cite{EL}). We use this process to prove Theorems~\ref{thm:dim} and \ref{thm:uncor} as well. Some of our arguments are essentially semigroup proofs (see \cite{LNP}), phrased in a stochastic language, which uses the semigroup of the F\"ollmer process rather than the more common heat or Ornstein-Uhlenbeck semigroups. A key point in our proofs is that we essentially compute two derivatives of the entropy rather than one. This gives us more precise information about the log-Sobolev inequality. The stochastic formulation allows for relatively simple computations. We go however an additional step beyond semigroup techniques, and also analyze \emph{pathwise} behavior of the F\"ollmer process. This analysis provides us with a natural way of decomposing the measure $\mu$ (see the proofs of Theorem \ref{thm:dim} and Theorem \ref{thm:uncor}). 

\subsection{Organization of paper} In section \ref{sec:self} we give the first set of proofs of Theorems \ref{thm:fisherEig} and \ref{thm:cov}. In section \ref{sec:follmer} we define the F\"ollmer process and analyze its properties. This analysis provides us with ways of decomposing $\mu$. Section \ref{sec:comp} contains the second set of  proofs of Theorems \ref{thm:fisherEig}  and \ref{thm:cov} via the  F\"ollmer process, and section \ref{sec:mix} contains the proofs of Theorems \ref{thm:dim} and \ref{thm:uncor}. Finally, the counterexamples to stability (and the proof of Theorem \ref{thm:unstable}) are discussed in section \ref{sec:counter}. 

\subsection{Acknowledgments} We are grateful to Ramon van Handel for his enlightening comments, in particular, the observation that the first part of Theorem \ref{thm:cov} can be deduced from Theorem \ref{thm:fisherEig} via the Cram\'er-Rao bound, is due to him. Our original proof of the first part of Theorem \ref{thm:cov} can be found in section \ref{sec:comp}. We are grateful to Djalil Chafa\"i for bringing reference~\cite{dembo} to our attention. We would also like to thank Max Fathi, Michel Ledoux and Dan Mikulincer for discussions and suggestions. In addition we would like to thank an anonymous referee for useful comments which improved the quality of this manuscript. Finally, we would like to acknowledge the hospitality of MSRI and to thank the organizers of the program on \emph{Geometric Functional Analysis and Applications} in the fall 2017 where part of this work was done. 

\section{self-improvements of the log-Sobolev inequality}\label{sec:self}
In this section we show how Theorem~\ref{thm:fisherEig} and the first part of Theorem~\ref{thm:cov} follow from scaling the log-Sobolev inequality appropriately and the Cram\'er-Rao bound. Recall that the log-Sobolev inequality can be rewritten 
\begin{equation}\label{eq:Elsi}
H(\mu\mid\mathcal L) - H ( \gamma \mid \mathcal L ) \le \frac 12 \left( I( \mu \mid \mathcal L ) - n \right) .
\end{equation}
Let $\Sigma$ be an $n\times n$ symmetric positive definite matrix and let $\mu_{\Sigma}$ be the law of $\Sigma X$ where $X\sim \mu$. Easy computations show that  
\[
H(\mu_{\Sigma}\mid\mathcal L)=H(\mu\mid\mathcal L)- \log \det \Sigma
\quad \text{and} \quad \mathcal I ( \mu_{\Sigma} \mid \mathcal L ) = \Sigma^{-1} \mathcal I ( \mu \mid \mathcal L ) \Sigma^{-1} . 
\]
In particular 
\[
I ( \mu_{\Sigma} \mid \mathcal L )  = \TR \left( \Sigma^{-2}  \mathcal I ( \mu \mid \mathcal L ) \right) . 
\]
Applying~\eqref{eq:Elsi} to $\mu_\Sigma$ thus yields 
\[
H ( \mu \mid \mathcal L ) - H ( \gamma \mid \mathcal L ) 
\leq \frac 12 \left( \TR \left( \Sigma^{-2} \mathcal I ( \mu \mid \mathcal L ) \right) - n + \log \det \Sigma^2 \right) . 
\]
The right-hand side of the inequality is minimal when $\Sigma = \sqrt{ \mathcal I ( \mu \mid \mathcal L )}$. This choice 
of $\Sigma$ yields the desired inequality~\eqref{eq:fisherEig1}. 
Note that the scaling proof of \eqref{eq:lsiDim2} amounts to considering 
only diagonal matrices of the form $\Sigma=\sigma \ID$ for some scalar $\sigma>0$. 
\bigskip

The first part of Theorem~\ref{thm:cov} follows from Theorem \ref{thm:fisherEig} via the Cram\'er-Rao bound:
\begin{equation}\label{eq:CR}
\C(\mu)^{-1} \preceq \IM( \mu \mid \mathcal L ) . 
\end{equation}
Indeed, recall that $\{\lambda_i\}$ and $\{\alpha_i\}$ denote the eigenvalues of $\C ( \mu)$ and 
$\IM ( \mu \mid \mathcal L )$, respectively. 
Since the map $x\mapsto \mathbf 1_{\{x>1\}} (x-1-\log x)$ is increasing on $[0,+\infty)$,
inequality~\eqref{eq:CR} imply that
\[
\begin{split}
\frac 12 \sum_{i=1}^n \mathbf{1}_{ \{\lambda_i < 1 \} } ( \lambda_i^{-1} - 1 + \log  \lambda_i )
& \le \frac 12 \sum_{i=1}^n\mathbf{1}_{\{\alpha_i>1\}}(\alpha_i-1-\log \alpha_i) \\
& \le \frac 12 \sum_{i=1}^n (\alpha_i-1-\log \alpha_i) .
\end{split}
\]
By Theorem~\ref{thm:fisherEig} this is upper bounded by the deficit $\delta ( \mu)$ 
and we obtain the first statement of Theorem~\ref{thm:cov}. 
The second part of the theorem follows from a straightforward computation which shows that  
\[
\D(\gamma_\lambda) = \frac 12 \sum_{i=1}^n \left (  \frac 1 {\lambda_i} - 1 + \log \lambda_i \right ) , 
\]
see section~\ref{sec:counter} below. The third part is also proved in section~\ref{sec:counter}. 

\section{The F\"ollmer Process}\label{sec:follmer}
Given an absolutely continuous probability measure $\mu$ on $\R^n$ we consider a stochastic process 
$(X_t)$ which is as close as possible to being a Brownian motion while having law $\mu$ 
at time $1$. Namely $X_1$ has law $\mu$, and the conditional law of 
$X$ given the endpoint $X_1$ is a Brownian bridge. 
Equivalently, the law of $X$ has density $\omega \mapsto f(\omega_1)$
with respect to the Wiener measure, where $f$ is the density of $\mu$ with respect to $\gamma$ and $\omega$ is an element of the classical Wiener space. 
In particular the process $X$ minimizes the relative entropy with respect to the Wiener measure 
among all processes having law $\mu$ at time $1$. 
This process was first considered by Schr\"odinger who was interested in the problem of minimizing the entropy with endpoint constraints, see~\cite{schrodinger} and the survey~\cite{leonard} where a nice historical account on the Schr\"odinger problem 
is given as well as the connection with optimal transportation. 

It was first observed by F\"ollmer~\cite{follmer} that the process $(X_t)$ solves the following 
stochastic differential equation: 
\[
d X_t = d B_t + \nabla \log P_{1-t} f ( X_t ) \, dt 
\] 
where $(B_t)$ is a standard Brownian motion, and $(P_t)$ is the heat semigroup, defined by 
\[
P_t h (x) = \E [ h(x+B_t) ] 
\]
for every test function $h$. We call the process $(X_t)$ the \emph{F\"ollmer process} and the process $(v_t)$ given by 
$v_t := \nabla \log P_{1-t} f ( X_t)$, the \emph{F\"ollmer drift}.

Below we recall some basic properties of this process, and we repeat the proof from \cite{lehec} of the log-Sobolev inequality based on the F\"ollmer process. We then prove more refined properties of the bridge which are needed for our stability results. Roughly, the properties (i),(ii) below correspond to the first derivative of entropy along the process while the further properties (iii),(iv),(v) correspond to the second derivative. Finally we show how the F\"ollmer process leads to natural decompositions of $\mu$.

From now on we assume that the measure $\mu$ has finite Fisher information
\[
I ( \mu \mid \gamma ) = \int_{\R^n} \vert \nabla \log f \vert^2 \, d\mu < + \infty .  
\]
\begin{proposition}\label{prop:firstDerivative}
The F\"ollmer drift $(v_t)$ has the following properties:  
\begin{itemize}
\item[(i)] The relative entropy of $\mu$ with respect to $\gamma$ satisfies 
\begin{equation}\label{eq:entropy}
H ( \mu \mid \gamma ) = \frac12 \E \left[ \int_0^1 \vert v_t \vert^2 \, dt \right].
\end{equation}
\item[(ii)] The F\"ollmer drift $(v_t)$ is a square integrable martingale.
\end{itemize}
\end{proposition}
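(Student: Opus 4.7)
The plan is to use the two descriptions of the F\"ollmer process in tandem: the SDE $dX_t = dB_t + v_t \, dt$ holding under its own law $\mathbb{Q}$, and the path-space relation $d\mathbb{Q}/d\mathbb{P} = f(\omega_1)$ with respect to the Wiener measure $\mathbb{P}$ on path space. Both parts of the proposition fall out of a single It\^o calculation applied to the positive process $M_t := P_{1-t}f(X_t)$, together with the conditional-expectation characterization of $v_t$.

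For part (i), note first that $u(t,x) := P_{1-t}f(x)$ satisfies the backward heat equation $\partial_t u + \tfrac{1}{2}\Delta u = 0$, so $M_t$ is a positive $\mathbb{P}$-martingale with $M_0 = P_1 f(0) = \int f \, d\gamma = 1$ and $M_1 = f(X_1)$. Applying It\^o's formula to $\log M_t$, using $\nabla \log P_{1-t}f(X_t) = v_t$ and computing the quadratic variation, produces the pathwise identity
\[
\log f(X_1) = \int_0^1 v_t \, dX_t - \frac{1}{2}\int_0^1 |v_t|^2 \, dt.
\]
By Girsanov, the decomposition of $X$ under $\mathbb{Q}$ is $dX_t = dB_t^{\mathbb{Q}} + v_t \, dt$ with $B^{\mathbb{Q}}$ a $\mathbb{Q}$-Brownian motion; substituting this and taking $\mathbb{Q}$-expectation (so that the stochastic-integral part vanishes) yields
\[
H(\mu \mid \gamma) = \E^{\mathbb{Q}}[\log f(X_1)] = \frac{1}{2}\E\left[\int_0^1 |v_t|^2 \, dt\right].
\]

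For part (ii), the cleanest route is to establish the representation
\[
v_t = \E^{\mathbb{Q}}\bigl[\nabla \log f(X_1) \,\big|\, \mathcal F_t\bigr],
\]
from which the martingale property is immediate and square-integrability follows at once. To prove this identity, observe that under $\mathbb{P}$ the conditional law of $X_1$ given $X_t = x$ is $\mathcal N(x, (1-t)\ID)$, and since $d\mathbb{Q}/d\mathbb{P}$ depends only on $X_1$, the conditional density of $X_1$ given $X_t=x$ under $\mathbb{Q}$ is proportional to $y \mapsto f(y)\varphi_{1-t}(y-x)$. Writing $\nabla P_{1-t}f(x) = \int \nabla f(y) \varphi_{1-t}(y-x) \, dy$ (which is the standard integration by parts for the heat semigroup) and dividing by $P_{1-t}f(x)$ produces exactly $\E^{\mathbb{Q}}[\nabla \log f(X_1) \mid X_t = x] = \nabla \log P_{1-t} f(x) = v_t$. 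The $L^2$ bound $\E[|v_t|^2] \le \E[|\nabla \log f(X_1)|^2] = I(\mu \mid \gamma) < +\infty$ is then immediate from Jensen's inequality.

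The main obstacle is the customary technical one of converting local martingales into genuine ones under only the finite-Fisher-information hypothesis: the It\^o calculation is initially only a pathwise identity, and one must justify that $\E^{\mathbb{Q}}[\int_0^1 v_t \, dB_t^{\mathbb{Q}}] = 0$, i.e.\ that the stochastic integral is a true martingale and not merely local. The $L^2$ bound from (ii) provides exactly the uniform integrability needed for this; the standard remedy is to first prove everything for smooth, bounded, strictly positive densities $f$ with bounded $\nabla \log f$ (where all integrals converge absolutely), and then pass to a general $f$ with $I(\mu \mid \gamma) < +\infty$ by mollification, using the uniform $L^2$-control on $v$ to take the limit in both identities.
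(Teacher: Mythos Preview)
The paper does not give its own proof of this proposition; it simply cites \cite{lehec}. Your argument is correct and is essentially the standard one found there: the entropy formula via It\^o's lemma applied to $\log P_{1-t}f(X_t)$ together with Girsanov, and the martingale property via the representation $v_t=\E[\nabla\log f(X_1)\mid\mathcal F_t]$, with square-integrability following from $I(\mu\mid\gamma)<\infty$.
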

The proof of this proposition can be found in \cite{lehec}. As was noticed in \cite{lehec}, the log-Sobolev inequality follows immediately from these properties once it is realized that
\begin{equation}\label{eq:fisher} 
\E [ \vert v_1 \vert^2 ]  = I ( \mu \mid \gamma ).
\end{equation}
Indeed, as $(v_t)$ is a martingale, $(\vert v_t\vert^2)$ is a sub-martingale so
\[
H ( \mu \mid \gamma ) = \frac12 \E \left[ \int_0^1 \vert v_t \vert^2 dt \right] 
\leq \frac 12 \E [ \vert v_1 \vert^2 ] = \frac 12 I ( \mu \mid \gamma ) . 
\]
In particular we obtained the following expression for the deficit. 
\begin{proposition}\label{prop:deficit}
Let $\mu$ be a probability measure on $\R^n$ 
with finite Fisher information and let $(v_t)$ be 
the associated F\"ollmer drift. Then  
\[
\D ( \mu ) = \frac12 \E \left[ \int_0^1 \vert v_1 - v_t \vert^2 dt \right] .  
\]
\end{proposition}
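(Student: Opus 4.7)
The plan is to reach the claimed formula by a direct bookkeeping argument: combine the entropy formula \eqref{eq:entropy} with the Fisher information identity \eqref{eq:fisher}, and then use the martingale property of $(v_t)$ (property (ii) of Proposition~\ref{prop:firstDerivative}) to rewrite the integrand.

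First, I would start from the definition of the deficit,
\[
\D(\mu) = \tfrac12 I(\mu\mid\gamma) - H(\mu\mid\gamma).
\]
Using \eqref{eq:fisher} the first term equals $\tfrac12 \E[|v_1|^2]$, which I can write as $\tfrac12\E\bigl[\int_0^1 |v_1|^2\,dt\bigr]$ since $v_1$ does not depend on $t$. Substituting also the representation \eqref{eq:entropy} of the relative entropy yields the compact identity
\[
\D(\mu) = \tfrac12\, \E\!\left[\int_0^1 \bigl(|v_1|^2 - |v_t|^2\bigr)\,dt\right].
\]

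The remaining step is to replace $|v_1|^2-|v_t|^2$ with $|v_1-v_t|^2$ inside the expectation. Here the martingale property is the key input: writing $v_1 = v_t + (v_1-v_t)$, one has
\[
|v_1|^2 = |v_t|^2 + 2\langle v_t, v_1-v_t\rangle + |v_1-v_t|^2,
\]
and since $(v_s)$ is a square-integrable martingale, $\E[\langle v_t, v_1-v_t\rangle]$ vanishes by conditioning on $\mathcal F_t$ (the increments of a martingale are orthogonal to its past in $L^2$). Therefore $\E[|v_1|^2-|v_t|^2] = \E[|v_1-v_t|^2]$ for every $t\in[0,1]$. Plugging this into the displayed identity above and applying Fubini gives exactly the claimed expression for $\D(\mu)$.

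There is no real obstacle: the only point to be careful about is the application of Fubini and the conditional expectation, which is justified because the martingale $(v_t)$ is square-integrable by property (ii), ensuring that all terms in sight are integrable over $\Omega\times[0,1]$.
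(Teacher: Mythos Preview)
Your proof is correct and follows essentially the same approach as the paper: both use the martingale property of $(v_t)$ to show $\E[|v_1|^2 - |v_t|^2] = \E[|v_1 - v_t|^2]$, and then combine this with the entropy representation \eqref{eq:entropy} and the Fisher identity \eqref{eq:fisher}. Your write-up is slightly more detailed (making the orthogonality-of-increments step and the Fubini justification explicit), but the argument is identical in substance.
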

\begin{proof}
Since $(v_t)$ is a square integrable martingale we have $\E[ \langle v_1 , v_t \rangle ] = \E [ \vert v_t\vert^2 ]$ so
\[
\E \left[ \vert v_1 \vert^2 - \vert v_t \vert^2 \right] = \E \left[ \vert v_1 - v_t \vert^2 \right] . 
\] 
Combining this with \eqref{eq:entropy} and \eqref{eq:fisher} yields 
the result. 
\end{proof}

The above proof of the log-Sobolev inequality utilizes information about the first derivative of the entropy, that is, the fact that the derivative $\vert v_t \vert^2$ is a sub-martingale. In order to obtain stability estimates for the log-Sobolev inequality we need to look at the \emph{second} derivative of the entropy. This is the role of the next proposition. In what follows $(\mathcal F_t)$ is the natural filtration of the process $(X_t)$ and 
\[
{\rm cov} ( X_1 \mid \mathcal F_t ) := \E [X_1^{\otimes 2} \mid \mathcal F_t ]
 - \E [X_1 \mid \mathcal F_t ]^{\otimes 2} 
\]
denotes the conditional covariance of $X_1$ given $\mathcal F_t$. 
\begin{proposition}\label{prop:secondDerivative}
Set $Q_t = \nabla^2 P_{1-t} f(X_t)$, then
\begin{itemize}
\item[(iii)] $v_t = \int_0^t Q_s d B_s$ for all $t$.  
\item[(iv)] At least for $t<1$ the following alternative expressions for $Q_t$ hold true
\begin{align}
\label{eq:Qt1}
Q_t 
& = \frac{ \mathrm{cov} ( X_1 \mid \mathcal F_t ) }{(1-t)^2} - \frac{\ID}{1-t}  \\
\label{eq:Qt2}
&  = \E[ \nabla^2 \log  f(X_1) \mid \mathcal F_t ] + {\rm cov} ( v_1 \mid \mathcal F_t ) .
\end{align}
\item [(v)] The process $(Q_t + \int_0^t Q_s^2 \, ds)$ is a martingale. 
\end{itemize}
\end{proposition}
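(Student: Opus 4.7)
The plan is to set $u(t,x) := \log P_{1-t}f(x)$, so that $v_t = \nabla u(t,X_t)$ and $Q_t = \nabla^2 u(t,X_t)$, and to observe that every claim is a consequence of It\^o's formula applied to a suitable space-derivative of $u$. The key analytic input is that, as a consequence of the backward heat equation $\partial_t P_{1-t}f = -\tfrac12 \Delta P_{1-t}f$ and the Hopf--Cole identity $\Delta\log g = \Delta g/g - |\nabla \log g|^2$, the function $u$ solves
\[
\partial_t u + \tfrac12 \Delta u + \tfrac12 |\nabla u|^2 = 0 \qquad \text{on } [0,1)\times \R^n.
\]

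For (iii), I apply It\^o to $v_t = \nabla u(t, X_t)$, using $dX_t = dB_t + \nabla u(t,X_t)\, dt$; the drift of the $i$-th component equals
\[
\partial_i \partial_t u + \nabla \partial_i u \cdot \nabla u + \tfrac12 \Delta \partial_i u,
\]
which is $\partial_i$ applied to the PDE above and therefore vanishes. The diffusion coefficient is $\nabla^2 u(t,X_t) = Q_t$, yielding the claimed representation.

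For (iv), I use two integral representations of $P_{1-t}f(x)$ combined with the general identity
\[
\nabla_x^2 \log \int \e^{\phi(x,y)}\, dy = \E_{\pi_x}\!\bigl[\nabla_x^2 \phi\bigr] + \mathrm{cov}_{\pi_x}\!\bigl(\nabla_x \phi\bigr),
\]
where $\pi_x$ is the probability measure with density proportional to $\e^{\phi(x,\cdot)}$. First, writing $P_{1-t}f(x) = \int f(y)\, p_{1-t}(y-x)\, dy$ with $p_s$ the Gaussian kernel, the conditional law of $X_1$ given $\mathcal F_t$ is precisely the measure $\pi_{X_t}$ attached to $\phi(x,y) = \log f(y) - |y-x|^2/(2(1-t))$; since $\nabla_x^2 \phi = -(1-t)^{-1}\mathrm{Id}$ and $\nabla_x \phi = (y-x)/(1-t)$, this yields the first formula for $Q_t$. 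Second, using instead $P_{1-t}f(x) = \E_{Z\sim\gamma}[f(x+\sqrt{1-t}\,Z)]$ and $\phi(x,z) = \log f(x+\sqrt{1-t}\,z) + \log \gamma(z)$, one has $\nabla_x \phi = (\nabla\log f)(x+\sqrt{1-t}\,z)$ and $\nabla_x^2 \phi = (\nabla^2\log f)(x+\sqrt{1-t}\,z)$; pushing forward by $z\mapsto x + \sqrt{1-t}\,z$ at $x = X_t$ and using $v_1 = \nabla\log f(X_1)$ produces the second formula.

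For (v), I combine the second formula of (iv) with (iii). The identity
\[
Q_t + v_t^{\otimes 2} = \E[\nabla^2 \log f(X_1) \mid \mathcal F_t] + \E[v_1^{\otimes 2} \mid \mathcal F_t]
\]
exhibits $Q_t + v_t^{\otimes 2}$ as a martingale, while It\^o applied to $v_t^{\otimes 2}$ using (iii) gives $d(v_t^{\otimes 2}) = (\cdots)\,dB_t + Q_t^2\, dt$, so $v_t^{\otimes 2} - \int_0^t Q_s^2\,ds$ is a local martingale; subtracting shows that $Q_t + \int_0^t Q_s^2\,ds$ is a local martingale. The main obstacle throughout is technical rather than conceptual: one must ensure enough smoothness and integrability of $u$ and its derivatives to justify It\^o's formula, differentiation under the integral, and the upgrade from local to true martingale. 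This is handled in the standard way by first regularizing $f$ via $f \mapsto P_\varepsilon f$, performing the computations in the smooth setting, and passing to the limit $\varepsilon \to 0$ using the finite Fisher information assumption.
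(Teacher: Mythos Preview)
Your argument is correct and follows essentially the same route as the paper: It\^o's formula for (iii), the two convolution representations of $P_{1-t}f$ (derivatives on the Gaussian kernel versus on $f$) combined with the change-of-measure formula for (iv), and the observation from \eqref{eq:Qt2} that $Q_t + v_t^{\otimes 2}$ is a martingale together with $d(v_t^{\otimes 2}) = (\cdots)\,dB_t + Q_t^2\,dt$ for (v). Your presentation is in fact a bit more explicit than the paper's---you write out the Hamilton--Jacobi equation for $u$ and the general Hessian-of-log-partition identity, and you flag the local-versus-true martingale issue and the regularization step, which the paper leaves implicit.
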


\begin{proof} The computation of $dv_t$ is a straightforward application of It\^o's formula.\\ 
For (iv) recall that $P_{1-t} f$ is the convolution of $f$ with some Gaussian. Putting derivatives 
on the Gaussian we get after some computations
\[
\nabla^2 \log P_{1-t} f
= - \frac{\ID}{1-t} +  \frac 1{(1-t)^2} \frac { P_{1-t} (f(x) x^{\otimes 2} )  }{P_{1-t} f} 
- \frac1{(1-t)^2} \left( \frac{  P_{1-t} (f(x) x) }{ P_{1-t} f  } \right)^{\otimes 2} .
\]
On the other hand, for every test function $u$, the following change of measure formula holds true
\[
\E [ u (X_1) \mid \mathcal F_t ] =  \frac { P_{1-t} ( uf ) (X_t)  }{P_{1-t} f (X_t)} .
\] 
This follows from the explicit expression that we have for the law of $(X_t)$. Plugging this into 
the previous display yields~\eqref{eq:Qt1}. The proof of~\eqref{eq:Qt2} is similar, only we put the 
derivatives on $f$ rather than the Gaussian when computing $\nabla^2 \log P_{1-t} f$. \\ 
To get (v) observe that by~\eqref{eq:Qt2}
\[
Q_t = {\rm martingale} - v_t \otimes v_t  . 
\]
Now since $v_t=Q_t dB_t$ we have $d (v_t \otimes v_t) = d ( {\rm martingale}) + Q_t^2 dt$. 
Hence the result. 
\end{proof}

Note that since $X_1 = B_1 + \int_0^1 v_t \, dt$ and since the expectation of $v_t$ 
is constant over time, the expectation of $v_t$ coincides with that of $\mu$. 
In addition,  it follows from~\eqref{eq:Qt1} that 
\[
\E[ Q_t ] = \E_\mu [ \nabla^2 \log f + (\nabla \log f)^{\otimes 2} ] - \E[ v_t\otimes v_t ] 
\]
for every $t$. Integrating by parts yields the following:
\begin{proposition}\label{prop:exp}
For every time $t$ we have $\E [ v_t ] = \E_\mu [ x ]$ and 
\[
\begin{split}
\E[ Q_t ] 
& = \E_\mu [ x\otimes x] - \ID - \E[ v_t\otimes v_t ] \\
& = {\rm cov} ( \mu ) - \ID - {\rm cov}  ( v_t) .  
\end{split}
\]
\end{proposition}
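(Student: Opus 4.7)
The plan is to obtain both identities by taking unconditional expectations in the representations already furnished by Propositions~\ref{prop:firstDerivative} and~\ref{prop:secondDerivative}, and then performing a short Gaussian-vs-Lebesgue integration by parts.

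First I would establish $\E[v_t] = \E_\mu[x]$. Starting from the SDE $dX_t = dB_t + v_t\,dt$ with $X_0=0$, we have $X_1 = B_1 + \int_0^1 v_s\,ds$. Since $(v_t)$ is a martingale by property~(ii), the quantity $\E[v_s]$ does not depend on $s$; taking expectations in the identity above thus gives $\E_\mu[x] = \E[X_1] = \E[v_0] = \E[v_t]$ for every $t$.

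For the second identity, the natural starting point is the alternative expression~\eqref{eq:Qt2}, namely $Q_t = \E[\nabla^2 \log f(X_1)\mid \mathcal F_t] + \mathrm{cov}(v_1 \mid \mathcal F_t)$. Taking unconditional expectations, the first term becomes $\E_\mu[\nabla^2 \log f]$ since $X_1\sim\mu$, while the martingale property $\E[v_1\mid \mathcal F_t]=v_t$ gives $\E[\mathrm{cov}(v_1\mid\mathcal F_t)] = \E[v_1^{\otimes 2}] - \E[v_t^{\otimes 2}]$, and the identity $v_1 = \nabla\log f(X_1)$ yields $\E[v_1^{\otimes 2}] = \E_\mu[(\nabla\log f)^{\otimes 2}]$. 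Hence
\[
\E[Q_t] = \E_\mu\bigl[\nabla^2 \log f + (\nabla\log f)^{\otimes 2}\bigr] - \E[v_t^{\otimes 2}].
\]
It then remains to verify the algebraic identity $\E_\mu[\nabla^2\log f + (\nabla\log f)^{\otimes 2}] = \E_\mu[x^{\otimes 2}] - \ID$. Writing $\log f = \log(d\mu/dx) - \log(d\gamma/dx)$ gives $\nabla\log f = \nabla\log(d\mu/dx) + x$ and $\nabla^2\log f = \nabla^2\log(d\mu/dx) + \ID$; integration by parts against $\mu$ (using $\E_\mu[\nabla\log(d\mu/dx)\otimes x] = -\ID$ and $\E_\mu[(\nabla\log(d\mu/dx))^{\otimes 2}] = -\E_\mu[\nabla^2\log(d\mu/dx)] = \mathcal I(\mu\mid\mathcal L)$) causes all Fisher information contributions to cancel, leaving precisely $\E_\mu[x^{\otimes 2}] - \ID$.

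Finally, the equivalence with the centered expression $\mathrm{cov}(\mu)-\ID-\mathrm{cov}(v_t)$ falls out of the first part of the proposition: setting $m := \E[v_t] = \E_\mu[x]$, we have $\E_\mu[x^{\otimes 2}] = \mathrm{cov}(\mu) + m^{\otimes 2}$ and $\E[v_t^{\otimes 2}] = \mathrm{cov}(v_t) + m^{\otimes 2}$, so the $m^{\otimes 2}$ terms cancel. The only genuine obstacle is justifying the integration by parts, which is routine under the standing assumption $I(\mu\mid\gamma) < \infty$ via a standard truncation/approximation argument.
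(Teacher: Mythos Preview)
Your argument is correct and matches the paper's approach essentially line for line: use $X_1=B_1+\int_0^1 v_s\,ds$ together with the martingale property for the first identity, take expectations in~\eqref{eq:Qt2} to reach $\E[Q_t]=\E_\mu[\nabla^2\log f+(\nabla\log f)^{\otimes 2}]-\E[v_t^{\otimes 2}]$, and then integrate by parts. Your integration-by-parts is slightly more explicit than the paper's one-word justification---note that a quicker route is to recognize $\nabla^2\log f+(\nabla\log f)^{\otimes 2}=\nabla^2 f/f$, so that $\E_\mu[\nabla^2 f/f]=\int_{\R^n}\nabla^2 f\,d\gamma$, and two Gaussian integrations by parts give $\E_\mu[x^{\otimes 2}]-\ID$ directly---but your decomposition via the Lebesgue density works just as well.
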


Other than facilitating an immediate proof of the log-Sobolev inequality, the F\"ollmer process provides a canonical decomposition of the measure $\mu$ which we now describe. Recall that
\[
 \E[h(X_1) \mid  \mathcal F_t] = \frac{ P_{1-t} (hf)(X_t) }{ P_{1-t} f(X_t) } ,
\] 
for every test function $h$. This allows to compute the density of the conditional law of $X_1$ 
given $\mathcal F_t$. Namely, let $\mu_t$ be the conditional law of $\frac { X_1-X_t } { \sqrt{1-t} }$ 
given $\mathcal F_t$.
Then 
\begin{equation}\label{eq:mut}
\mu_t ( dx ) = \frac{ f( \sqrt{1-t} \, x + X_t ) }{ P_{1-t} f(X_t ) } \, \gamma ( dx ) .
\end{equation}
\begin{lemma}
We have 
\begin{equation} \label{eq:dat}
X_1  = \int_0^1 \C( \mu_t) d B_t
\end{equation}
almost-surely, and
\begin{equation}\label{eq:defRepDef}
\D(\mu) \geq \int_0^1 \E \left[ \D ( \mu_t) \right] dt.
\end{equation}
\end{lemma}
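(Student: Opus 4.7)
My plan is as follows. For the identity~\eqref{eq:dat}, I would start from the F\"ollmer SDE in its integrated form $X_1 = B_1 + \int_0^1 v_t\,dt$, plug in the representation $v_t = \int_0^t Q_s\,dB_s$ from Proposition~\ref{prop:secondDerivative}(iii), and apply the stochastic Fubini theorem to get $\int_0^1 v_t\,dt = \int_0^1 (1-s)Q_s\,dB_s$. Combining with $B_1 = \int_0^1 dB_s$ yields
\[
X_1 = \int_0^1 \bigl(\ID + (1-s)Q_s\bigr)\,dB_s.
\]
Finally, since $\mu_s$ is the conditional law of $(X_1-X_s)/\sqrt{1-s}$ given $\mathcal F_s$ and $X_s$ is $\mathcal F_s$-measurable, one has $\C(\mu_s) = \C(X_1\mid \mathcal F_s)/(1-s)$, and~\eqref{eq:Qt1} immediately gives $\ID + (1-s)Q_s = \C(\mu_s)$.

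For~\eqref{eq:defRepDef}, the strategy is to express $\E[\D(\mu_t)]$ as an explicit functional of the F\"ollmer drift. From the density formula~\eqref{eq:mut} and the change of variable $y = \sqrt{1-t}\,x + X_t$ (under which $y$ is distributed as the conditional law of $X_1$ given $\mathcal F_t$), one obtains
\[
I(\mu_t \mid \gamma) = (1-t)\,\E\bigl[|v_1|^2 \bigm| \mathcal F_t\bigr], \qquad H(\mu_t \mid \gamma) = \E\bigl[\log f(X_1) \bigm| \mathcal F_t\bigr] - \log P_{1-t} f(X_t),
\]
using the identity $v_1 = \nabla \log f(X_1)$ for the first equality. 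Taking unconditional expectations, the first gives $\E[I(\mu_t\mid\gamma)] = (1-t)\,I(\mu\mid\gamma)$, while the second reduces to evaluating $\E[\log P_{1-t} f(X_t)]$.

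The main (purely computational) step is this last expectation. Applying It\^o's formula to $\log P_{1-s} f(X_s)$, using $\partial_s P_{1-s}f = -\tfrac12 \Delta P_{1-s} f$, the SDE $dX_s = v_s\,ds + dB_s$, and the identity $\Delta \log u = \Delta u/u - |\nabla \log u|^2$, the finite-variation terms telescope to give the clean expression $d(\log P_{1-s} f(X_s)) = v_s \cdot dB_s + \tfrac12 |v_s|^2\,ds$. Since $P_1 f(0) = \int f\,d\gamma = 1$, taking expectations yields $\E[\log P_{1-t} f(X_t)] = \tfrac12\,\E\bigl[\int_0^t |v_s|^2\,ds\bigr]$. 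Combined with Proposition~\ref{prop:firstDerivative}(i) this produces
\[
\E[\D(\mu_t)] = \tfrac12\,\E\!\left[\int_t^1 \bigl(|v_1|^2 - |v_s|^2\bigr)\,ds\right] = \tfrac12 \int_t^1 \E\bigl[|v_1 - v_s|^2\bigr]\,ds,
\]
where the second equality uses the martingale property of $(v_s)$ from Proposition~\ref{prop:firstDerivative}(ii). Integrating in $t$ and exchanging the order of integration gives
\[
\int_0^1 \E[\D(\mu_t)]\,dt = \tfrac12 \int_0^1 s\,\E\bigl[|v_1-v_s|^2\bigr]\,ds \leq \tfrac12 \int_0^1 \E\bigl[|v_1-v_s|^2\bigr]\,ds = \D(\mu),
\]
the final equality being Proposition~\ref{prop:deficit}. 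The one genuinely substantive step is the It\^o calculation producing $d(\log P_{1-s} f(X_s))$; the rest is bookkeeping (and a standard justification of the stochastic Fubini under the finite Fisher information hypothesis).
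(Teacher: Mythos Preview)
Your proof is correct and, for~\eqref{eq:defRepDef}, essentially identical to the paper's: the same It\^o computation for $\log P_{1-t}f(X_t)$, the same identification $\E[\D(\mu_t)] = \tfrac12\int_t^1 \E[|v_1-v_s|^2]\,ds$, and the same Fubini/monotonicity step to conclude.

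For~\eqref{eq:dat} your route differs slightly from the paper's. The paper works directly with the martingale $\E[X_1\mid\mathcal F_t] = P_{1-t}(xf)(X_t)/P_{1-t}f(X_t)$, computes its It\^o differential, and identifies the integrand as $\C(X_1\mid\mathcal F_t)/(1-t) = \C(\mu_t)$ by the same kind of change-of-measure manipulation that produced~\eqref{eq:Qt1}. You instead start from the integrated SDE $X_1 = B_1 + \int_0^1 v_t\,dt$, invoke the representation $v_t = \int_0^t Q_s\,dB_s$ from Proposition~\ref{prop:secondDerivative}(iii), and apply stochastic Fubini to collapse everything to $\int_0^1(\ID + (1-s)Q_s)\,dB_s$, after which~\eqref{eq:Qt1} gives the identification with $\C(\mu_s)$. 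Both arguments are short and yield the same integrand; yours leans on results already recorded in Proposition~\ref{prop:secondDerivative} (plus a Fubini justification), while the paper's is a self-contained martingale-representation computation that does not go through $Q_t$ or Fubini.
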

\begin{proof}
Again $\E [X_1 \mid \mathcal F_t ] = P_{1-t} ( xf ) ( X_t )  / P_{1-t} f ( X_t)$. So 
\[
d\, \E [X_1 \mid \mathcal F_t ] =  \nabla \left( \frac{ P_{1-t} ( xf ) }{ P_{1-t} f } \right)  ( X_t ) \, d B_t. 
\]
Arguing as in the proof of (iv) we get
\[
\nabla \left( \frac{ P_{1-t} ( xf ) }{ P_{1-t} f } \right)  ( X_t ) = \frac { {\rm cov} ( X_1 \mid \mathcal F_t ) }{1-t} = {\rm cov } ( \mu_t ) , 
\]
which proves~\eqref{eq:dat}. 
For the inequality~\eqref{eq:defRepDef}, observe that by~\eqref{eq:mut}
\[
\D( \mu_t ) = \frac{1-t}2 \E [\vert \nabla \log f(X_1) \vert^2 \mid \mathcal F_t ]
 - \E [\log f ( X_1 ) \mid  \mathcal F_t ] + \log P_{1-t} f( X_t) . 
\]
Also, by It\^o's formula 
\[
d \log P_{1-t} f( X_t) = v_t \, dB_t + \frac 12 \vert v_t \vert^2 \, dt . 
\]
Putting everything together we get  
\[
\E \left[\D( \mu_t ) \right]  = \frac 12 \int_t^1 \E \left[\vert v_1 - v_s  \vert^2 \right] \, ds .
\]
Thus, by Proposition~\ref{prop:deficit}
\[
\int_0^1 \E \left [\D (\mu_t) \right ] dt
= \frac 12 \int_0^1 s \,  \E [ \vert v_1 - v_s \vert^2 ]\, ds 
\leq \D ( \mu ) . \qedhere 
\]
\end{proof}

\begin{remark}
At this stage it maybe worth noticing that the measure-valued process $(\mu_t)$ 
coincides with a simplified version of the stochastic localization process 
of the first named author~\cite{eldan0}.
\end{remark}

\section{Comparison theorems}\label{sec:comp}
In this section we prove Theorems~\ref{thm:fisherEig} and~\ref{thm:cov} via the F\"ollmer process.

\begin{proof}[Proof of Theorem~\ref{thm:fisherEig}]
Because the result is invariant by scaling we can assume without loss of 
generality that $\C (\mu)$ is strictly smaller than the identity. Let $m(t) = - \E [ Q_t ]$. 
We know from Proposition~\ref{prop:exp} that 
\begin{equation}\label{eq:idmt}
m(t) = - {\rm cov} (\mu) +\ID + {\rm cov} (v_t) . 
\end{equation}
This shows in particular that $m(t)$ is positive definite. 
Item (v) of Proposition~\ref{prop:secondDerivative} shows that $\frac{d}{dt}m(t) \succeq m(t)^2$. 
Since $m(t)$ is positive definite this amounts to $\frac d{dt} m(t)^{-1} \preceq - \ID$. 
We use this information to compare $m(t)$ with $m(1)$. 
We get 
\begin{equation}\label{eq:compstep2}
m(t) \preceq \left( m(1)^{-1} + (1-t) \ID \right)^{-1} . 
\end{equation}
Let $\tilde f$ be the density of $\mu$ with respect to the Lebesgue measure and observe 
that   
\[
\begin{split}
m(1) & = - \E [ Q_1 ] = - \E_\mu [ \nabla^2 \log f ] \\
& =  - \E_\mu [ \nabla^2 \log \tilde f ] - \ID \\
& = \mathcal I ( \mu \mid \mathcal L ) - \ID . 
\end{split}
\]
Taking the trace in~\eqref{eq:compstep2} and using Proposition~\ref{prop:exp} thus gives 
\[
- \E_\mu [ \vert x\vert^2 ] + n + \E[ \vert v_t \vert^2 ] 
\leq \sum_{i=1}^n \frac 1 { (\alpha_i-1)^{-1} + 1-t }  ,
\] 
where the $\alpha_i$ are the eigenvalues of $\mathcal I ( \mu \mid \mathcal L )$.
Integrating between $0$ and $1$ and applying item (i) of Proposition~\ref{prop:firstDerivative}
yields
\[
- \E_\mu [ \vert x\vert^2 ]  +  n + 2\, H ( \mu \mid \gamma )  
\leq \sum_{i=1}^n  \log ( \alpha_i ) .
\]
Lastly, a straightforward computation shows that the left hand side equals 
$2 H ( \mu \mid \mathcal L ) - 2 H ( \gamma \mid \mathcal L)$.  
\end{proof}
\begin{proof}[Proof of Theorem~\ref{thm:cov}]
Consider the orthogonal decomposition $\C(\mu) = \sum_{i=1}^n \lambda_i u_i^{\otimes 2}$ where $u_i$ are unit orthogonal vectors,
and again let $m(t) = -\E [Q_t ]$. Recall~\eqref{eq:idmt}, which shows in particular that $m(0) = - \C ( \mu ) + \ID$. Fix $i \in [n]$ such that $\lambda_i < 1$ and denote $\theta = u_i$. Note that 
\[
\langle \theta, m(0) \theta \rangle = 1-\lambda_i > 0 . 
\]
Moreover, we have
\begin{align*}
\frac{d}{dt} \bigl \langle \theta, m(t) \theta \bigr \rangle \geq \bigl \langle \theta, m(t)^2 \theta \bigr \rangle \geq \bigl \langle \theta, m(t) \theta \bigr \rangle^2.
\end{align*}
Since the function $g(t) = \frac{1}{1/c - t}$ solves the ordinary differential equation $\frac{d}{dt}g(t) = g(t)^2$ with the boundary condition $g(0) = c$, an application of Gr\^onwall's inequality gives 
\[
\langle \theta, m(t) \theta \rangle \geq \frac{1}{\langle \theta, m(0) \theta \rangle^{-1} - t} =\frac{1}{(1-\lambda_i)^{-1} - t} . 
\]
Summing up over all $i$ such that $\lambda_i < 1$, we have 
\[
\frac d{dt} \E[\vert v_t\vert^2 ] = \TR \left ( m(t)^2 \right ) \geq \sum_{i=1}^n \frac{\mathbf{1}_{ \{ \lambda_i < 1 \}  } }{ { \left((1-\lambda_i)^{-1} -t\right)^2 }}.
\]
Integrating this between $t$ and $1$, we obtain
\[
\E [ \vert v_1 \vert^2 ] - \E [ \vert v_t\vert^2 ] 
\geq \sum_{i=1}^n \mathbf{1}_{ \{ \lambda_i < 1 \}  } \left ( \frac 1 {\lambda_i} - 1 - \frac 1{ (1-\lambda_i)^{-1} - t } \right ). 
\]
Now we integrate between $0$ and $1$ and we use Proposition~\ref{prop:deficit}.
We get
\[
\D ( \mu ) \geq \frac 12 \sum_{i=1}^n \mathbf{1}_{ \{ \lambda_i < 1 \}  } \left (  \frac 1 {\lambda_i} - 1 + \log \lambda_i \right ),
\]
which is the desired inequality.
\end{proof}

\section{Decompositions into mixtures}\label{sec:mix}
In this section we prove Theorems \ref{thm:dim} and \ref{thm:uncor}. 

\begin{proof}[Proof of Theorem~\ref{thm:dim}]
The idea of the proof is to show that for any $t$, the transportation distance between $X_1$ and the sum of the independent random vectors $\E[X_1\mid\mathcal F_t]+(B_1-B_t)$ can be controlled by the deficit. Optimizing over $t$ yields the theorem. 
\\
The map 
\[
t\mapsto \E \left[ \vert v_1\vert^2  - \vert v_t \vert^2  \right]
\]
is a non-increasing function since $(\vert v_t \vert^2)$ is a sub-martingale. Hence by Proposition \ref{prop:deficit} and as $(v_t)$ is a martingale,
\begin{equation}\label{eq:stepdivis}
\D ( \mu ) = \frac 12 \E \left[ \int_0^1 \vert v_1 - v_t \vert^2 dt \right]=\frac 12 \int_0^1 \E \left[ \vert v_1\vert - \vert v_t \vert^2\right] dt \geq \frac t2 \E \left[ \vert v_1 - v_t \vert^2 \right] 
\end{equation}
for every $t\in [0,1]$. Let $Y_t = \E [ X_1 \mid  \mathcal F_t ] + B_1 - B_t$ and note that since $B_1 - B_t$ is independent 
of $\mathcal F_t$, the random vector $Y_t$ has law $\nu_t * \gamma_{0,1-t}$
where $\nu_t$ is the law of $\E[X_1 \mid \mathcal F_t]$. Hence since $X_1$ has law $\mu$ and 
\[
\E [ X_1 \mid \mathcal F_t ] = X_t + (1-t) v_t,
\]
we get by Jensen's inequality that
\begin{align*}
\W_2^2 ( \mu , \nu_t * \gamma_{0,1-t} ) & \leq \E \left[\vert X_1 - Y_t \vert^2 \right] 
= \E \left[\left\vert \int_t^1 (v_s - v_t) ds \right\vert^2 \right] \leq (1-t) \int_t^1 \E \left[\vert v_s - v_t \vert^2 \right] ds \\
& \leq (1-t)^2 \E [ \vert v_1 - v_t \vert^2 ]\leq  \E [ \vert v_1 - v_t \vert^2 ].
\end{align*}  
Combining this with \eqref{eq:stepdivis} yields
\[
\W_2^2 ( \mu , \nu_t * \gamma_{0,1-t} ) \leq \frac 2t \D ( \mu ) . 
\]
This inequality gives the distance between $\mu$ and a mixture of Gaussians but with the wrong covariance. To remedy that we must pay a dimensional price. By the triangle inequality for $\W_2$ and the 
fact that $\W_2^2 ( \gamma_{0,1-t} , \gamma_{0,1} ) \leq (1 - \sqrt{1-t})^2 n \leq t^2 n$, 
we get
\begin{align*}
\W_2 ( \mu , \nu_t * \gamma ) 
& \leq \W_2 ( \mu , \nu_t * \gamma_{0,1-t} ) + \W_2 (  \nu_t * \gamma_{0,1-t} ,  \nu_t * \gamma ) \\
& \leq \W_2 ( \mu , \nu_t * \gamma_{0,1-t} ) +  \W_2 ( \gamma_{0,1-t} , \gamma ) \\
& \leq \sqrt{ \frac{2\D ( \mu )}{t}}+ \sqrt{n} t. 
\end{align*}
If $\D(\mu) \leq n$, choosing $t=\left(\frac{\D(\mu)}{n}\right)^{\frac{1}{3}}$
in the previous display gives 
\[
\W_2 ( \mu , \nu_t * \gamma ) \leq (\sqrt 2 +1) n^\frac{1}{6}\D( \mu)^{\frac{1}{3}} ,
\]
which in turn yields the desired inequality \eqref{eq:dimension}. If on the contrary $\D(\mu)\geq n$, inequality \eqref{eq:dimension}
holds with $\nu = \mu$, simply because $\W_2 ( \mu , \mu * \gamma ) = \sqrt n$.
If $\D(\mu)=0$ the argument shows that $\mu = \nu_0 * \gamma$, where $\nu_0$ is the Dirac point mass at $\E [X_1]$. 
\end{proof}

\begin{proof}[Proof of Theorem~\ref{thm:uncor}]
The starting point of the proof is identity~\eqref{eq:dat}:
\[
X_1  = \int_0^1 \C(\mu_t) d B_t.
\]
The idea is then to extract from this identity two processes $(Y_t),(Z_t)$ close to each other in transportation distance such that $Z_1\sim \gamma$. We then write $X_1=Y_1+W$ for some random vector $W$ and show that $\E[\langle Y,W\rangle]=0$. The requirement $Z_1\sim \gamma$ is enforced by ensuring that the quadratic variation of $(Z_t)$ satisfies $[Z]_1=\ID$. 

We start with some notation. Let $M$ be an $n \times n$ symmetric matrix and let $M = \sum_{i =1}^n\kappa_i\, u_i \otimes u_i$ be its eigenvalue decomposition. We then set $M_+ := \sum_{i =1}^n  \max ( \kappa_i , 0 ) \, u_i \otimes u_i$ and similarly $\max ( M , \ID ) = \sum_{i =1}^n  \max ( \kappa_i , 1 ) \, u_i \otimes u_i$. Using Theorem \ref{thm:cov} together with the fact that $\frac{1}{x} - 1 + \log(x) \geq \frac{1}{2} (x-1)^2$ for all $x \in (0,1]$, we conclude that for every measure $\nu$, one has
\[
\D(\nu) \geq \frac{1}{2} \TR \left[ \left( \ID - \C(\nu) \right)_+^2  \right].
\]
Using this bound and inequality \eqref{eq:defRepDef} we get,
\begin{equation}\label{eq:deficitpos}
\D(\mu) \geq \frac{1}{2}  \E\left[\int_0^1  
\TR \left[ \left( \ID - \C(\mu_t) \right)_+^2 \right]dt\right].
\end{equation}
Next we will write the right-hand side above as roughly the difference 
in transportation distance between the random vectors $Y_1$ and $Z_1$ mentioned above.

For convenience, define $A_t :=  \C(\mu_t)$. We now define a random process $(C_t)$ 
taking values in the set of symmetric matrices as follows.
Set $C_0 = 0$ and  
\[
d C_t =  \max ( A_t^2 , \ID ) \, dt , \quad t \in [0, \tau_1)
\]
where $\tau_1$ is the first time the largest eigenvalue of $C_t$ 
hits the value $1$. Notice that $t \ID \preceq C_t$ on $[0,\tau_1)$, 
so $\tau_1 \leq 1$. If $C_{\tau_1} \neq \ID$, which implies that $\tau_1 < 1$, 
we let $O_1$ be the orthogonal projection 
onto the range of $C_{\tau_1} - \ID$, and set 
\[
d C_t = O_1 \max ( A_t^2 , \ID ) O_1 \, dt , \quad t \in [\tau_1 , \tau_2 ) 
\]
where $\tau_2$ is the first time the largest eigenvalue of $O_1 C_t O_1$ hits the value $1$. 
If $C_{\tau_2} \neq \ID$ we let $O_2$ be the projection onto the range of $C_{\tau_2}-\ID$ and proceed 
similarly, and so on, until the first time $\tau_k$ such that  $C_{\tau_k} = \ID$. 
On $[\tau_k,1]$ we let $dC_t = 0$ and thus $C_t = \ID$. 
To sum up, the matrix $C_t$ satisfies $0 \preceq C_t \preceq \ID$ for all $t\in [0,1]$, $C_1 = \ID$ and 
\[
d C_t = L_t \max ( A_t^2 , \ID ) L_t \, dt ,  
\]
where $L_t$ is the orthogonal projection onto the range of $C_t-\ID$. 

Next, consider the processes $(Y_t),(Z_t)$ defined by
\[
Y_0 = Z_0 = 0, ~~ d Y_t = L_t A_t d B_t, ~~ d Z_t = L_t  \max \left ( A_t, \ID \right ) d B_t.
\]
and note that
\[
[Z]_t = \int_0^t L_s \max \left ( A_s^2, \ID \right ) L_s ds = C_t.
\]
This implies that $[Z]_1 = \ID$ almost surely so $Z_1 \sim \gamma$.
On the other hand, we have by \eqref{eq:deficitpos} and It\^o's isometry,
\begin{align*}
\E [|Y_1 - Z_1|^2] & = \E \left[\int_0^1 \TR \left[  L_t \left ( \max \left ( A_t, \ID \right ) - A_t \right )^2  L_t \right] dt\right] \\
& \leq \E \left[\int_0^1 \TR \left [ \left (  \max \left ( A_t, \ID \right ) - A_t  \right )^2 \right ] dt\right] \\
& = \E \left[\int_0^1 \TR \left [ \left ( \left ( \ID - A_t \right )_+ \right )^2 \right ] dt\right]\leq 2 \D(\mu).
\end{align*}
Letting $\nu$ be the law of $Y_1$, we thus get $\W_2^2(\nu, \gamma) \leq 2 \D(\mu)$. Now define the random vector $W := \int_0^1 \left ( A_t - L_t A_t \right ) d B_t$ so by \eqref {eq:dat}, $X_1=Y_1+W$. 
It remains to show that $\E[ \langle Y,  W \rangle ]=0$. This is again a consequence of It\^o's isometry: 
\begin{align*}
\E[ \langle Y,  W \rangle ] 
& = \E \left[\int_0^1 \TR  \left ( L_t A_t \left ( A_t - L_t A_t \right )^T \right )  dt\right] \\
& = \E \left[\int_0^1 \TR  \left ( L_t A_t^2 - L_t A_t^2 L_t \right) dt\right] = 0 
\end{align*}
since $L_t = L_t^2$. This completes the proof.
\end{proof}

\section{Counterexamples to stability}\label{sec:counter}
In this section we provide simple counterexamples 
to the stability of the logarithmic Sobolev inequality
with respect to the Wasserstein distance, thus proving Theorem \ref{thm:unstable} as well as the third part of Theorem \ref{thm:cov}. The standard Gaussian on $\R$ is denoted by $\gamma$ and for $a\in \R, s\ge 0$ we let $\gamma_{a,s}$ be the Gaussian centered at $a$ with variance $s$. Our counterexamples are nothing more than Gaussian mixtures. For such measures, the following two lemmas provide a lower bound on the Wasserstein $p$-distance to translated Gaussians, and an upper bound on the log-Sobolev deficit. The combination of these two lemmas will prove Theorems \ref{thm:unstable} and \ref{thm:cov}. We start with the upper bound on the log-Sobolev deficit. 

\begin{lemma}\label{lem:mixDef}
Let $a,b \in \R$, and $\sigma, t\in [0,1]$. Then 
\[
\D \left( (1-t) \gamma_{a,\sigma} + t \gamma_{b,\sigma} \right) 
\leq \frac 14 \left( \sigma^{-1} -1\right)^2 -(1-t) \log ( 1-t) - t\log t.
\]
\end{lemma}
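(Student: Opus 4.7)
The plan is to reduce to the $\sigma=1$ case, where Proposition~\ref{prop:shannon} applies directly, and then pay a controlled price for the rescaling. Let $X\sim\mu$, set $Z:=X/\sqrt\sigma$, and denote by $\nu$ the law of $Z$. Then $\nu=p\ast\gamma$ where $p=(1-t)\delta_{a/\sqrt\sigma}+t\delta_{b/\sqrt\sigma}$ is a two-point measure with Shannon entropy $S(p)=-(1-t)\log(1-t)-t\log t$, so Proposition~\ref{prop:shannon} yields $\D(\nu)\le S(p)$.

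Next I would relate $\D(\mu)$ to $\D(\nu)$ via the standard scaling identities $H(\mu\mid\mathcal L)=H(\nu\mid\mathcal L)-\tfrac12\log\sigma$ and $I(\mu\mid\mathcal L)=\sigma^{-1}I(\nu\mid\mathcal L)$. Plugging these into the Lebesgue form $\D(\rho)=\tfrac12(I(\rho\mid\mathcal L)-n)-H(\rho\mid\mathcal L)+H(\gamma\mid\mathcal L)$ used in Section~\ref{sec:self}, one finds in dimension $n=1$
\begin{equation*}
\D(\mu)-\D(\nu)=\frac{1-\sigma}{2\sigma}\,I(\nu\mid\mathcal L)+\tfrac12\log\sigma.
\end{equation*}
Since $\sigma\le 1$ the coefficient $(1-\sigma)/(2\sigma)$ is non-negative, so I would now bound $I(\nu\mid\mathcal L)$ from above. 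Because $\nu=p\ast\gamma$, the classical Stam convolution inequality for Fisher information gives $I(\nu\mid\mathcal L)\le I(\gamma\mid\mathcal L)=1$.

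Combining everything with $\D(\nu)\le S(p)$ yields
\begin{equation*}
\D(\mu)\le S(p)+\frac{1-\sigma}{2\sigma}+\tfrac12\log\sigma=S(p)+\tfrac12\bigl(\sigma^{-1}-1+\log\sigma\bigr).
\end{equation*}
To conclude I would verify the elementary inequality $\tfrac12(\sigma^{-1}-1+\log\sigma)\le\tfrac14(\sigma^{-1}-1)^2$ for $\sigma\in(0,1]$: setting $u=\sigma^{-1}-1\ge 0$ it becomes $u-\log(1+u)\le u^2/2$, which follows from a one-line calculus check, since the difference $u^2/2-u+\log(1+u)$ vanishes at $u=0$ and has derivative $u^2/(2(1+u))\ge 0$. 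The argument is essentially bookkeeping; the only non-trivial inputs are Proposition~\ref{prop:shannon} and the Fisher information convolution inequality, and the main thing to watch is the sign accounting in the scaling identity and the fact that $\sigma\le 1$ is exactly what makes the extra Fisher information term harmless.
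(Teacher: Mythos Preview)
Your argument is correct and follows a genuinely different route from the paper. The paper proves the mixture inequality $\D((1-t)\mu+t\nu)\le(1-t)\D(\mu)+t\D(\nu)-\varphi(t)$ directly from the convexity of Fisher information and a pointwise entropy bound, and then combines it with the explicit computation $\D(\gamma_{0,\sigma})=\tfrac12(\sigma^{-1}-1+\log\sigma)$. You instead rescale to the unit-variance mixture, invoke Proposition~\ref{prop:shannon} for the entropy part, and control the rescaling cost via the Lebesgue-form scaling identities together with the convolution bound $I(p*\gamma\mid\mathcal L)\le 1$; remarkably, this produces exactly the same extra term $\tfrac12(\sigma^{-1}-1+\log\sigma)$. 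Your approach cleanly separates the ``mixture'' and ``wrong variance'' contributions, while the paper's is slightly more self-contained in that it needs only convexity of $I$ (which also suffices in place of Stam's inequality, since $\nu$ is a mixture of translates of $\gamma$, each with Lebesgue Fisher information $1$).

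Two small caveats. First, be aware that in the paper Proposition~\ref{prop:shannon} is \emph{derived from} the proof of Lemma~\ref{lem:mixDef}, so citing it here is formally circular; you should either prove the two-point case of Proposition~\ref{prop:shannon} directly (the convexity-of-$I$ and $\log$-monotonicity argument is one line) or reorder the dependencies. Second, the derivative in your last step is $u^2/(1+u)$, not $u^2/(2(1+u))$; this does not affect the sign, so the conclusion stands.
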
 
\begin{proof}
Let $\varphi (t) = t \log t + (1-t)\log(1-t)$ and $\mu,\nu$ be probability measures on $\R$. The lemma follows immediately by combining the estimates 
\begin{equation}\label{eq:mixture}
\D \left( (1-t) \mu + t \nu \right)  \leq (1-t) \D \left( \mu \right) +t \D \left( \nu \right) -\varphi (t)
\end{equation}
and 
\begin{equation}\label{eq:gaussDef}
\D(\gamma_{0, \sigma} ) \leq\frac 14 \left( \sigma^{-1} - 1 \right)^2 . 
\end{equation}
(when $\sigma \leq 1$) and using the fact that $\D$ is invariant under translations.
\\
The validity of (\ref{eq:mixture}) follows immediately from the combination of the convexity estimates 
\[
I \left( (1-t) \mu + t \nu \mid \gamma \right) \leq (1-t) I ( \mu \mid \gamma ) + t I ( \nu \mid \gamma)
\]
and 
\[
H \left( (1-t) \mu + t\nu \mid \gamma \right) \geq (1-t) H ( \mu \mid \gamma ) +
t H ( \nu \mid \gamma ) + \varphi (t).
\]
The convexity of the Fisher information is a well-known fact, 
it is a direct consequence of the convexity of the map  
$(x,y) \mapsto y^2 /x$ on $(0,\infty)\times \R$. For the second inequality, let $f$ and $g$ 
be the respective densities of $\mu$ and $\nu$ with respect to $\gamma$ and use the fact that the logarithm is increasing
to write 
\[
\left( (1-t)f+tg \right) 
\log\left( (1-t)f+tg \right) 
\geq 
(1-t)f \log \left( (1-t)f \right) + t g \log (t g ) . 
\]
Integrating with respect to $\gamma$ yields the result.\\
For the estimate~\eqref{eq:gaussDef}, a direct computation shows that 
$H(\gamma_{0,\sigma} \mid \gamma) = \frac{1}{2}\left( \sigma - 1 - \log \sigma \right)$ 
and $I(\gamma_{0,\sigma} \mid \gamma)= (\sigma-1)^2/\sigma$, so that
\[
\D( \gamma_{0,\sigma} ) =\frac{1}{2}\left( \sigma^{-1}  - 1 + \log ( \sigma ) \right) . 
\]
We conclude using $x-1-\log x \leq (x-1)^2 / 2$ for $x\geq 1$. 
\end{proof}

\begin{proof}[Proof of Proposition~\ref{prop:shannon}]
When $\sigma =1$, Lemma \ref{lem:mixDef} can be rewritten $\D ( p * \gamma ) \leq S (p)$ 
for any probability measure $p$ in $\R$ which is a combination of two Dirac point masses. 
The argument can easily be generalized to any discrete probability measure $p$, and to any dimension,
proving Proposition~\ref{prop:shannon}. 
\end{proof}

\begin{proof}[Proof of the third part of Theorem~\ref{thm:fisherEig}]
Note that 
\[
\mathrm{var}( (1-t) \gamma_{a,1} + t \gamma_{b,1} ) = 1 + t(1-t) (b-a)^2 . 
\]
Set $\mu_k = \left( 1-\frac 1k \right) \gamma_{0,1} + \frac 1k \gamma_{k^2,1}$. 
Then ${\rm var}(\mu_k) \to \infty$. On the other hand $\D(\mu_k)\to 0$ by Lemma \ref{lem:mixDef}. 
\end{proof}

Next we move on to the lower bound on the Wasserstein distance.
\begin{lemma}\label{lem:w1}
Let $a,b\in \R$, $\sigma \in (0,1]$, $t\in [0,1]$ and 
let $\mu = (1-t) \gamma_{a, \sigma} + t\gamma_{b, \sigma}$. Suppose that
\begin{equation}\label{eq:tailassump}
\min (t,1-t) \geq 2 \exp \left( - \frac { (b-a)^2 }{32} \right) .
\end{equation}
Then, for every $p\geq 1$
\[
\inf_{m\in \R} \left\{  \W_p^p ( \mu , \gamma_{m,1} )
\right\} \geq  \min(t, 1-t)\,  \frac{ \vert b-a\vert^p }{ 4^{p+1} }   .
\]
\end{lemma}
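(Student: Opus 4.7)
The plan is to use the one-dimensional quantile representation
\[
\W_p^p(\mu,\gamma_{m,1}) = \int_0^1 \bigl|F_\mu^{-1}(u) - m - \Phi^{-1}(u)\bigr|^p \, du,
\]
where $\Phi$ is the standard Gaussian CDF, and to exhibit a sub-interval of $[0,1]$ of length $\ge \min(t,1-t)/4$ on which the integrand is bounded below by $(|b-a|/4)^p$. Up to the reflection $x \mapsto -x$ and the symmetry $(a,t) \leftrightarrow (b,1-t)$ (both of which preserve all the relevant quantities), we may assume $b > a$ and $t \le 1/2$; set $D = b - a$, $c = (a+b)/2$, and $s = t = \min(t,1-t)$. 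The proof then splits on the sign of $m - c$.

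Suppose first that $m \le c$. I would focus on the window $u \in [1 - t/2,\, 1 - t/4]$, which has length $t/4$. Two facts combine there. First, since
\[
F_\mu(b) = (1-t)\Phi(D/\sqrt\sigma) + t/2 \le (1-t) + t/2 = 1 - t/2,
\]
one has $F_\mu^{-1}(u) \ge b$ throughout the window. Second, the standard Gaussian tail bound $\Phi(-x) \le \tfrac12 e^{-x^2/2}$ together with the hypothesis $2e^{-D^2/32} \le t$ yields $\Phi(-D/4) \le \tfrac12 e^{-D^2/32} \le t/4$, so $\Phi^{-1}(u) \le \Phi^{-1}(1-t/4) \le D/4$. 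Using $m \le c = b - D/2$,
\[
F_\mu^{-1}(u) - m - \Phi^{-1}(u) \ge b - c - D/4 = D/4
\]
on the window, and integrating the $p$-th power gives $\W_p^p \ge (D/4)^p \cdot (t/4) = s\, D^p / 4^{p+1}$.

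If $m \ge c$, I would argue symmetrically on the lower window $u \in [(1-t)/4,\, (1-t)/2]$, of length $(1-t)/4 \ge t/4$. The analogous estimates $F_\mu(a) = (1-t)/2 + t\Phi(-D/\sqrt\sigma) \ge (1-t)/2$ (hence $F_\mu^{-1}(u) \le a$) and $\Phi(-D/4) \le t/4 \le (1-t)/4$ (hence $\Phi^{-1}(u) \ge -D/4$) together yield $(m + \Phi^{-1}(u)) - F_\mu^{-1}(u) \ge c - a - D/4 = D/4$, giving the same bound. The only delicate point is matching the hypothesis \eqref{eq:tailassump} with the Gaussian tail: the exponent $D^2/32$ is calibrated exactly so that $\Phi(-D/4) \le s/4$, which is what confines the relevant $\Phi^{-1}$-values inside $[-D/4, D/4]$; the rest is arithmetic.
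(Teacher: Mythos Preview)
Your proof is correct and is essentially the same argument as the paper's, phrased through the quantile coupling rather than a bare mass-transport estimate. The paper fixes $m$, relabels so that $|a-m|\le|b-m|$ (equivalently $m\le c$), sets $z=m+\sqrt{2\log(2/t)}$, and observes that $\gamma_{m,1}([z,\infty))\le t/4$ while $\mu([b,\infty))\ge t/2$ with $b-z\ge D/4$; you do the identical computation on the quantile window $[1-t/2,\,1-t/4]$ and handle $m\ge c$ by the mirror argument instead of relabeling.
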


\begin{proof}
Let $m\in \R$ and suppose without loss of generality that $|a-m| \leq |b-m|$ and that $b>a$. Define $z = m + \sqrt{2 \log\left(\frac{2}{t}\right)}$ and note that the assumption \eqref{eq:tailassump} together with the fact that $b-m \geq \frac{1}{2} |b-a|$ implies that $b-z \geq \frac{1}{4} |b-a|$. Now, by a standard Gaussian tail estimate we have $\gamma_{m,1} \bigl ( [z, \infty) \bigr) \leq \frac{t}{4}$. On the other hand 
\[
\mu ( [b, \infty) ) \geq  t \gamma_{b, \sigma^2} ( [b, \infty) ) = \frac{t}{2} .
\]
Therefore, in order to transport $\gamma_{m,1}$ to $\mu$, at least $t/4$ unit of mass to the left of 
$z$ should move to the right of $b$. As a result
\[
\W_p^p(\mu , \gamma_{m,1}) \geq \frac{t}{4} (b-z)^p \geq \frac{t}{4} \left (\frac{|b-a|}{4} \right )^p ,
\]
which yields the result. 
\end{proof}

\begin{proof}[Proof of Theorem \ref{thm:unstable}]
For the first part of the theorem we shall work in dimension $1$ but the result extends to any dimension by taking the tensor product of the one dimensional example by a standard Gaussian. Consider the sequence of measures $(\mu_k)$ given by 
\[
\mu_k = \left( 1-\frac 1k \right) \gamma_{0,1} + \frac 1k \gamma_{k^2,1}.   
\]
Lemmas \ref{lem:mixDef} and \ref{lem:w1} imply that $\D(\mu_k) \to 0$ and 
$\inf_{m\in \R} \left\{ \W_1(\mu_k, \gamma_{m,1} ) \right\} \to \infty$.
\\
For the second part of the theorem, define $\mu_k = (1-t) \gamma_{a,\sigma} + t\gamma_{b,\sigma}$ 
with
\[
t= k^{-3/2} , \quad a= - k^{-1} , \quad b = - \frac{1-t}t a , \quad \sigma = 1 - t(1-t) (b-a)^2 .  
\]
It is straightforward to check that $\mu_k$ is isotropic. 
Since $(b-a)^2 = a^2 /t^2 = k$, the hypothesis~\eqref{eq:tailassump} is satisfied for large enough $k$
and Lemma~\ref{lem:w1} gives
\[
\inf_m \left\{ \W_2^2 (\mu_k, \gamma_{m,1}) \right\} \geq t \frac{(b-a)^2}{64} = \frac 1{ 64 \sqrt k} . 
\]
On the other hand, we have $\sigma = 1 - k^{-1/2} + o(k^{-1/2})$ so that Lemma~\ref{lem:mixDef} gives
\[
\D(\mu_k) \leq \frac 1 k + o \left( \frac 1 k \right) .  
\]
Set $n(k) = \left \lfloor k^{3/4} \right \rfloor$. Since both 
the deficit and $\W_2^2$ behave additively when taking tensor products
we have 
\[
\inf_{m\in \R^{n(k)}} \left\{ \W_2^2 \left (\mu_k^{\otimes n(k)} , \gamma_{m,\mathrm{Id}_{n(k)}} \right) \right\} 
= \Omega (  k^{1/4} ) = \Omega ( n(k)^{1/3} ) 
\]
and $\D \left ( \mu_k^{\otimes n(k)}  \right )  = O ( k^{-1/4} ) = O ( n(k)^{-1/3} )$.
\end{proof}

%
%
%%%%%%%%%%%
%
%

\end{document}